\documentclass{amsart}
\usepackage{amsmath,amssymb,graphicx,amsthm,bm,hyperref,mathrsfs}
\usepackage{graphicx}
\usepackage{hyperref}
\hypersetup{colorlinks=true,citecolor=blue,linkcolor=cyan}
\vfuzz2pt 
\hfuzz2pt 
\newtheorem{thm}{Theorem}[section]
\newtheorem{cor}[thm]{Corollary}
\newtheorem{lem}[thm]{Lemma}
\newtheorem{prop}[thm]{Proposition}
\theoremstyle{definition}
\newtheorem{defn}[thm]{Definition}
\theoremstyle{remark}
\newtheorem{rem}[thm]{Remark}
\numberwithin{equation}{section}

\newcommand{\F}{\mathcal{F}}
\newcommand{\Ff}{\mathbb{F}}
\newcommand{\R}{\mathcal{R}}

\newcommand{\Gal}{\mbox{Gal}}
\newcommand{\Z}{\mathbb{Z}}
\newcommand{\Hh}{\mathcal{H}}

\newcommand{\Prob}{\mbox{Prob}}
\newcommand{\Pp}{\mathbb{P}}

\newcommand{\Mod}[1]{\ (\text{mod}\ #1)}

\newcommand{\lcm}{\mbox{lcm}}
\newcommand{\M}{\mathcal{M}}
\newcommand{\Res}{\mbox{Res}}
\begin{document}

\title[Number of Points on the Full Moduli Space of Curves over Finite Fields]{Number of Points on the Full Moduli Space of Curves over Finite Fields}%
\author{Patrick Meisner}%


\begin{abstract}

The distribution of the number of points on abelian covers of $\Pp^1(\Ff_q)$ ranging over an irreducible moduli space has been answered in a recent work by the author \cite{M1},\cite{M2}. The authors of \cite{BDFK+} determined the distribution over the whole moduli space for curves with $\Gal(K(C)/K)$ a prime cyclic. In this paper, we prove a result towards determining the distribution over the whole moduli space of curves with $\Gal(K(C)/K)$ any abelian group. We successfully determine the distribution in the case $\Gal(K(C)/K)$ is a power of a prime cyclic.
\end{abstract}
\maketitle


\section{Introduction}\label{intro}

Let $\Hh$ be a family of smooth, projective curves over $\Ff_q$, the finite field with $q$ elements. We are interested in determining the probability that a curve, chosen randomly from our family, has a given number of points. Classical results due to Katz and Sarnak \cite{KS} tell us what happens if we fix the genus of the curve, $g$ and let $q\to\infty$. Progress has been made in the other situation when $q$ is fixed and $g\to\infty$.

Let $K=\Ff_q(X)$ and $K(C)$ be the field of functions of $C$, a curve over $\Ff_q$. Then $K(C)$ will be a finite field extension of $K$. Moreover, if we fix a copy of $\Pp^1(\Ff_q)$, then every such finite extension corresponds to a smooth, projective curve (Corollary 6.6 and Theorem 6.9 from Chapter I of \cite{hart}). If $K(C)$ is a Galois extension of $K$, denote $\Gal(C):=\Gal(K(C)/K)$ and $g(C)$ to be the genus of $C$. Define
$$\Hh_{G,g} = \{C: \Gal(C)=G, g(C)=g\}.$$

We want to determine the probability, that a random curve in this family has a given number of points. That is, for every $N\in \Z_{\geq 0}$, we want to determine
$$\Prob(C\in\Hh_{G,g} : \#C(\Pp^1(\Ff_q)) = N) := \frac{|\{C\in \Hh_{G,g} :  \#C(\Pp^1(\Ff_q)) = N\}|}{|\Hh_{G,g}|}.$$

Therefore, the first thing we need to do is determine $|\Hh_{G,g}|$. Wright \cite{wright} was the first to answer such a question. He proved that if $G$ is abelian and $q\equiv 1 \Mod{\exp(G)}$, (where $\exp(G)$ is the smallest integer such that $ng=e$ for all $g\in G$) then as $g\to\infty$
\begin{align}\label{wright}
\sum_{j=0}^{N-1}q^{-\frac{j}{N}}|\Hh_{G,g+j}| \sim C(K,G)g^{\phi_G(Q)-1}q^{\frac{g}{N}}
\end{align}
where $C(K,G)$ is a non-zero constant, $N = |G|-\frac{|G|}{Q}$ where $Q$ is the smallest prime divisor of $|G|$ and $\phi_G(Q)$ is the number of elements of $G$ of order $Q$. (Note: Wright's result does not require $q\equiv 1 \Mod{\exp(G)}$, but we will always assume that here and it makes the formula slightly nicer).

\begin{rem}

From now on the function $\phi_G(s)$ will be the number of elements of $G$ of order $s$.

\end{rem}

Bucur, David, Feigon, Kaplan, Lalin, Ozman and Wood \cite{BDFK+} shows that if $Q$ is a prime then as $g\to\infty$
$$|\Hh_{\Z/Q\Z,g}| = \begin{cases} c_{Q-2}q^{\frac{2g+2Q-2}{Q-1}} P\left(\frac{2g+2Q-2}{Q-1}\right) + O\left(q^{(\frac{1}{2}+\epsilon) \frac{2g+2Q-2}{Q-1}  }\right) & g\equiv 0 \mod{Q-1} \\ 0 & \mbox{otherwise}  \end{cases}$$
where $c_{Q-2}$ is a constant that they make explicit and $P$ is a monic polynomial of degree $Q-2$.

Our first result is extending this to any abelian group. But first, we must define a quasi-polynomial.
\begin{defn}

A quasi-polynomial is a function that can be written as
$$p(x) = c_n(x)x^n + c_{n-1}(x)x^{n-1} + \dots + c_0(x)$$
where $c_i(x)$ is a periodic function with integer period. We call the $c_i$ the coefficients of the quasi-polynomial. Moreover, if $c_n(x)$ is not identically the zero function then we say $p$ has degree $n$ and call it the leading coefficient.

\end{defn}

\begin{defn}

Let $\R=[0,\dots,r_1-1]\times\dots\times[0,\dots,r_n-1]\setminus\{(0,\dots,0)\}$ be a set of inter-valued vectors. For any $\vec{\alpha}\in\R$ let
$$e(\vec{\alpha}) = \underset{j=1,\dots,n}{\lcm}\left(\frac{r_j}{(r_j,\alpha_j} \right)$$

\end{defn}

\begin{thm}\label{mainthm1}

Let $G$ be any abelian group and $q\equiv 1 \mod{\exp(G)}$. If there exists some $(d(\vec{\alpha}))_{\vec{\alpha}\in\R}$ such that
$$2g+2|G|-2 = \sum_{\vec{\alpha}\in\R} \left(|G|-\frac{|G|}{e(\vec{\alpha})}\right) d(\vec{\alpha}) + |G|-\frac{|G|}{e(\vec{d})} $$
where $\vec{d}=(d_1,\dots,d_n)$ and
$$d_j = \sum_{\vec{\alpha}\in\R}\alpha_jd(\vec{\alpha})$$
then
$$|\Hh_{G,g}| = \sum_{j=1}^{\eta}P_j(2g)q^{\frac{2g+2|G|-2}{|G|-\frac{|G|}{s_j}}} + O\left(q^{\frac{(1+\epsilon)g}{|G|-\frac{|G|}{s_1}}}\right)$$
where $1=s_0<s_1<\dots<s_{\eta}=\exp(G)$ are the divisors of $\exp(G)$, $P_1$ is a quasi-polynomial of degree $\phi_G(s_1)-1$ and $P_j$ is a quasi-polynomial of degree at most $\phi_G(s_j)-1$. If no such solution exists then $|\Hh_{G,g}|=0$.

\end{thm}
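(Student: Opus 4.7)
The plan is to combine the Kummer-theoretic parametrization of abelian covers used by the authors of \cite{BDFK+} in the prime cyclic case with the character-sum and Dirichlet series techniques developed in \cite{M1,M2}. Writing $G\cong \Z/r_1\Z\times\cdots\times\Z/r_n\Z$, the hypothesis $q\equiv 1\pmod{\exp(G)}$ lets Kummer theory identify $G$-covers of $\Pp^1$ with assignments of vectors $\vec\alpha\in\R\cup\{\vec 0\}$ to closed points of $\Pp^1$ that encode the inertia generators. Normalizing so that infinity is distinguished, a cover is determined by a tuple of pairwise-coprime squarefree monic polynomials $(F_{\vec\alpha})_{\vec\alpha\in\R}$ in $\Ff_q[x]$ with $\deg F_{\vec\alpha}=d(\vec\alpha)$; the inertia type at infinity is then forced to be $-\vec d\pmod G$ with $\vec d=\sum_{\vec\alpha}\alpha\,d(\vec\alpha)$. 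Applying Riemann--Hurwitz, together with the fact that inertia of type $\vec\alpha$ has order exactly $e(\vec\alpha)$, translates the genus condition into precisely the degree identity appearing in the statement; in particular $|\Hh_{G,g}|=0$ when no solution exists.

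Having reduced to counting such configurations, I would introduce the multivariable generating function
\begin{equation*}
Z(u_{\vec\alpha})\ =\ \prod_{P} \Bigl(1+\sum_{\vec\alpha\in\R}u_{\vec\alpha}^{\deg P}\Bigr),
\end{equation*}
with $P$ running over monic irreducibles in $\Ff_q[x]$, whose $\prod u_{\vec\alpha}^{d(\vec\alpha)}$-coefficient is exactly the number of pairwise-coprime squarefree tuples of the prescribed degrees. The ramification-at-infinity condition is then imposed by orthogonality of characters of $G$, turning $Z$ into a finite linear combination of twisted Euler products that factor through the Weil zeta function of $\Pp^1$ and Dirichlet $L$-functions over $\Ff_q[x]$.

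Specializing $u_{\vec\alpha}=u^{|G|-|G|/e(\vec\alpha)}$ so that the coefficient of $u^{2g+2|G|-2}$ in the resulting one-variable series recovers $|\Hh_{G,g}|$, each $\vec\alpha\in\R$ contributes a zeta-type pole at $u=q^{-1/(|G|-|G|/e(\vec\alpha))}$. Grouping by $e(\vec\alpha)=s_j$ produces an effective pole at $u=q^{-1/(|G|-|G|/s_j)}$ of order at most $\phi_G(s_j)$. A Perron-type contour shift, with the remainder controlled by Weil's Riemann hypothesis for the non-principal $L$-functions arising from the character orthogonality, expresses $|\Hh_{G,g}|$ as a sum of residues at these poles plus an error of size $O(q^{(1+\epsilon)g/(|G|-|G|/s_1)})$. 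Each residue at $s_j$ yields a quasi-polynomial $P_j(2g)$ of degree at most $\phi_G(s_j)-1$; the quasi-polynomial (rather than polynomial) structure is forced by the congruence constraints that $2g+2|G|-2$ must satisfy modulo the various $|G|-|G|/e(\vec\alpha)$. Comparison with Wright's asymptotic \eqref{wright} at the smallest prime divisor $s_1$ of $|G|$ then forces $\deg P_1=\phi_G(s_1)-1$ exactly.

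The principal obstacle is the singularity analysis just described: because the character orthogonality detecting the inertia at infinity couples the $F_{\vec\alpha}$ across different $\vec\alpha$, it is not automatic that the various contributions on the circle $|u|=q^{-1/(|G|-|G|/s_j)}$ combine into a single effective pole with the correct leading order, nor that the leading coefficient of $P_1$ is nonzero. A secondary bookkeeping difficulty is showing that the periodic coefficients produced by the residue calculation have integer period, so that the $P_j$ are genuine quasi-polynomials in the sense of the definition above.
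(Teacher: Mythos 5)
Your outline reproduces the analytic half of the paper's argument (generating Euler product, specialization $u_{\vec\alpha}=u^{c(\vec\alpha)}$, contour shift, residues giving quasi-polynomials, and the comparison with Wright's asymptotic \eqref{wright} to pin down $\deg P_1=\phi_G(s_1)-1$), but it is missing the combinatorial half, and the omission is fatal to the count as stated. The tuples of pairwise-coprime squarefree monic polynomials $(f_{\vec\alpha})_{\vec\alpha\in\R}$ whose degrees satisfy the Riemann--Hurwitz identity do \emph{not} parametrize curves with $\Gal(C)=G$: if the set $\R^*=\{\vec\alpha: d(\vec\alpha)\neq 0\}$ generates only a proper subgroup $H\subsetneq G$, the resulting extension $K(\sqrt[r_1]{F_1},\dots,\sqrt[r_n]{F_n})$ has Galois group $H$, and moreover its genus is not $g$ but $\frac{g-1}{|G|/|H|}+1$ (e.g.\ for $G=\Z/4\Z$ the tuple $(1,f_2,1)$ gives $Y^4=f_2^2$, a $\Z/2\Z$-cover of smaller genus). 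These degenerate tuples contribute at the \emph{top} order $q^{(2g+2|G|-2)/c_1}$ whenever $s_1\mid\exp(H)$, so they cannot be swept into the error term; and restricting the generating function to tuples whose support generates $G$ is not a multiplicative condition, so it destroys the Euler product your singularity analysis depends on.

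The paper resolves this by counting the larger set $M(G,g)$ of monic curves with $\Gal(C)=H\subset G$ and genus $\frac{g-1}{|G|/|H|}+1$ (which \emph{does} have the Euler-product parametrization) and then performing an inclusion--exclusion over the lattice of subgroups of $G$, counted as subsets, using Delsarte's M\"obius function $\mu$ on abelian groups with $\sum_{H\subset G}\mu(H)=0$ unless $G$ is trivial. This M\"obius inversion (Lemma 4.1) is what converts the residue asymptotics for $|\F_{D;\vec k,E}|$ into an asymptotic for $|\Hh_{G,g}|$, and it is also why the final bound on $\deg P_j$ is $\phi_G(s_j)-1$: each subgroup $H$ contributes a quasi-polynomial of degree at most $\phi_H(s_j)-1\le\phi_G(s_j)-1$. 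You would need to add this step (or an equivalent sieve) before your contour-shift computation yields the theorem. The remaining concerns you raise --- possible cancellation among residues and integrality of the periods --- are real but are handled in the paper exactly as you suggest: the degrees are only claimed as upper bounds for $j\ge 2$, the periods come from roots of unity $\xi_{c_i}^{-kD}$ of bounded order, and nonvanishing of the leading coefficient of $P_1$ is deduced from Wright's theorem.
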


If we restrict to the case $G=(\Z/Q\Z)^n$, then we can say more about the polynomials.

\begin{cor}\label{mainthm1cor}

If $G=(\Z/Q\Z)^n$ for $Q$ a prime, $q\equiv1 \mod{Q}$, and $2g+2Q^n-2\equiv 0\Mod{Q^n-Q^{n-1}}$, then
$$|\Hh_{G,g}| = P\left(\frac{2g+2Q^n-2}{Q^n-Q^{n-1}}\right)q^{\frac{2g+2Q^n-2}{Q^n-Q^{n-1}}} + O\left(q^{\frac{(1+\epsilon)g}{Q^n-Q^{n-1}}}\right)$$
where $P$ is a polynomial of degree $Q^n-2$ with leading coefficient
$$\frac{1}{(Q^n-2)!}\frac{q+Q^n-1}{q}\frac{L_{Q^n-2}}{\zeta_q(2)^{Q^n-1}}$$
where $L_{Q^n-2}$ is a constant defined in \eqref{Lnum} and $\zeta_q(s)$ is the zeta function for $\Ff_q[X]$ (\eqref{zeta}). If $2g+2Q^n-2\not\equiv 0 \Mod{Q^n-Q^{n-1}}$, then $|\Hh_{G,g}|=0$
\end{cor}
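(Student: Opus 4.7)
The plan is to derive Corollary~\ref{mainthm1cor} by specializing Theorem~\ref{mainthm1} to $G=(\Z/Q\Z)^n$ and then carefully unpacking the leading term of the resulting asymptotic. Since every non-identity element of $(\Z/Q\Z)^n$ has order $Q$, one has $\exp(G)=Q$, so the divisors of $\exp(G)$ are just $s_0=1$ and $s_1=Q$, giving $\eta=1$. The asymptotic in Theorem~\ref{mainthm1} therefore collapses to a single main term. Combined with $|G|-|G|/Q=Q^n-Q^{n-1}$ and $\phi_G(Q)=Q^n-1$, this identifies both the exponent of $q$ and the fact that $P_1$ has degree at most $Q^n-2$, matching the corollary's statement.

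I would next verify that the existence hypothesis of Theorem~\ref{mainthm1} is equivalent to the divisibility $Q^n-Q^{n-1}\mid 2g+2Q^n-2$. Here $\R=[0,Q-1]^n\setminus\{\vec{0}\}$. Since $Q$ is prime, any $\vec{\alpha}\in\R$ has a coordinate $\alpha_j\neq 0$ with $(Q,\alpha_j)=1$, forcing $e(\vec{\alpha})=Q$; the same applies to $\vec{d}$ whenever $\vec{d}\neq\vec{0}$. The equation in Theorem~\ref{mainthm1} thus collapses to
$$2g+2Q^n-2 \;=\; (Q^n-Q^{n-1})\Bigl(1+\sum_{\vec{\alpha}\in\R}d(\vec{\alpha})\Bigr),$$
which admits a non-negative integer solution precisely when the claimed divisibility holds. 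When it fails, Theorem~\ref{mainthm1} directly gives $|\Hh_{G,g}|=0$, establishing the second case of the corollary.

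To upgrade $P_1$ from a quasi-polynomial to an honest polynomial, I would inspect the origin of the periodic coefficients in the proof of Theorem~\ref{mainthm1}. These periodicities arise from congruence restrictions on $g$ modulo $|G|-|G|/s_j$ for the various $s_j\mid\exp(G)$. In our setting there is only one such modulus, $Q^n-Q^{n-1}$, and the hypothesis of the corollary already fixes the residue class of $2g+2Q^n-2$ modulo $Q^n-Q^{n-1}$; no further refinement is possible, so the coefficients of $P_1$ are genuine constants.

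The main effort is the explicit leading coefficient. The strategy is to follow the generating-series analysis in the proof of Theorem~\ref{mainthm1} and extract the dominant residue. For $G=(\Z/Q\Z)^n$, Kummer theory parametrizes $C\in\Hh_{G,g}$ by tuples of squarefree ramification data indexed by the $Q^n-1$ non-trivial characters of $G$, subject to coprimality and a degree constraint from Riemann--Hurwitz. The associated Dirichlet series factors as an Euler product whose dominant singularity at $u=q^{-1}$ is a pole of order $Q^n-1$. A Tauberian/residue extraction then produces the factor $\frac{1}{(Q^n-2)!}$ from the pole order, the constant $L_{Q^n-2}$ from the finite Euler factors as defined in \eqref{Lnum}, the factor $\zeta_q(2)^{-(Q^n-1)}$ from the squarefree correction, and the ratio $\frac{q+Q^n-1}{q}$ from the local contribution at $\infty\in\Pp^1$. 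The chief obstacle is careful bookkeeping of these constants and confirming that in the elementary abelian case all contributions from non-primitive ramification collapse cleanly, so that the leading coefficient takes exactly the stated closed form.
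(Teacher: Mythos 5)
Your outline follows the paper's own route: specialize the general machinery to $G=(\Z/Q\Z)^n$, use $\exp(G)=Q$ to collapse the sum to a single term $q^{D}$ with $D=\frac{2g+2Q^n-2}{Q^n-Q^{n-1}}$, kill the quasi-periodicity because $D$ is pinned (equivalently, because one can renormalize $c(\vec{\alpha})=1$ and count by conductor, so the roots of unity $\xi_{c_i}^{-kD}$ in Lemma \ref{rescalclem1} become $1$), and extract the leading coefficient from the order-$(Q^n-1)$ pole at $z=q^{-1}$, which only the $\vec{t}=\vec{0}$, $\nu=0$, $a=0$ term produces (this uses that $\{\vec{\alpha}\in\R_\nu:\vec{t}\cdot\vec{\alpha}=a\}$ is a proper subset of $\R$ unless everything is trivial — a point you assert but should justify via the vector-space structure). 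However, the two items you defer to ``careful bookkeeping'' are precisely where the content of the corollary lies, and one is described in a way that would produce the wrong constant if followed literally. The factor $\frac{q+Q^n-1}{q}$ is \emph{not} a local Euler factor at $\infty$ inside the generating series. In the paper it appears only at the very last step: for each residue class $\vec{k}$ of $(\deg F_1,\dots,\deg F_n)$ modulo $Q$ one gets a leading constant $\frac{1}{(Q^n-2)!}\frac{L_{Q^n-2}}{\zeta_q(2)^{Q^n-1}}$ that is \emph{independent} of $\vec{k}$, but the Riemann--Hurwitz contribution of $\infty$ shifts the exponent, so the main term is $q^{D}$ for $\vec{k}=\vec{0}$ and $q^{D-1}$ for each of the $Q^n-1$ classes $\vec{k}\neq\vec{0}$; summing over all $Q^n$ classes gives $1+(Q^n-1)q^{-1}$. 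Trying to realize this as one more factor of the Euler product misplaces a power of $q$.

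The second glossed point is the phrase ``non-primitive ramification collapses cleanly.'' The tuples $(f_{\vec{\alpha}})$ of prescribed total degree parametrize covers whose Galois group is the subgroup of $G$ generated by the support of the tuple, so the union of the $\F_{\vec{d}(\vec{\alpha})}$ counts $M(G,g)$, not $\Hh^*_{G,g}$, and one must apply Delsarte's M\"obius inversion over subgroups (Lemma \ref{inexlem}). Crucially, for elementary abelian $G$ a proper subgroup $H$ contributes the \emph{same} exponential $q^{D}$, not a smaller one; the leading coefficient survives only because the corresponding polynomial has degree $\phi_H(Q)-1=|H|-2<Q^n-2$. This, together with tracking the normalizations (the factor $|G|$ between $\Hh_{G,g}$ and the monic count, and the $(r_1\cdots r_n)^{-(\ell+1)}$ from orthogonality), must be supplied before the stated closed form is actually established.
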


If $q\equiv 1 \Mod{\exp(G)}$ and $G$ is abelian, then we can we can write
$$\Hh_{G,g} = \bigcup \Hh^{\vec{d}(\vec{\alpha})}$$
where $\Hh^{\vec{d}(\vec{\alpha})}$ is an irreducible moduli space of $\Hh_{G,g}$ (see Section 2 of \cite{M1} for a full treatment of this).

For specific classes of groups, several authors determined that as $d(\vec{\alpha})\to\infty$ for all $\vec{\alpha}$, then
\begin{align}\label{irresult}
\Prob(C\in\Hh^{\vec{d}(\vec{\alpha})} : \#C(\Pp^1(\Ff_q)) = N) \sim \Prob\left(\sum_{i=1}^{q+1}X_i = N\right)
\end{align}
where the $X_i$ are i.i.d. random variables that can be made completely explicit.

Kurlberg and Rudnick \cite{KR} were the first to do this for hyper-elliptic curves ($G=\Z/2\Z$). Bucur, David, Feigon and Lalin \cite{BDFL1},\cite{BDFL2} then extended this to prime cyclic curves ($G=\Z/Q\Z$, $Q$ a prime). Lorenzo, Meleleo and Milione \cite{LMM} then proved this for $n$-quadratic curves ($G=(\Z/2\Z)^n$). The author \cite{M1},\cite{M2} completes this for any abelian group.

The fact that these results need all the $d(\vec{\alpha})\to\infty$ is why we can not deduce the results for the whole space from the results for the subspaces. However, in the case $G=(\Z/Q\Z)^n$, we can deduce the main term of Corollary \ref{mainthm1cor} from these results. However, the error term we get from doing this is just $(1+o(1))$. Likewise we can do the same for Corollary \ref{mainthm2cor} and Theorem \ref{mainthm3}.

If $G=\Z/r_1\Z\times\dots\times\Z/r_n\Z$, then since we assume $q\equiv1 \mod{\exp(G)}$, by Kummer Theory, we can find $F_j\in\Ff_q[X]$, $r_j^{th}$-power free such that
$$K(C) = K\left(\sqrt[r_1]{F_1(X)}, \dots, \sqrt[r_n]{F_n(X)}\right).$$

Fix an ordering $x_1,\dots,x_{q+1}$ of $\Pp^1(\Ff_q)$ such that $x_{q+1}$ is the point at infinity. This ordering will be fixed for the rest of this paper. Then, the number of points on the curve depend on the values of
$$\chi_{r_j}(F_j(x_i)), j=1,\dots,n, i=1,\dots,q+1$$
where $\chi_{r_j}:\Ff_q\to\mu_{r_j}$ is a multiplicative character of order $r_j$. Moreover, the value of $F_j(x_{q+1})$ depends on the leading coefficient and degree of $F_j(X)$. (Again, see \cite{M2} for more on this.) Therefore, we will define
$$\vec{k}=(k_1,\dots,k_n)\in\Z^n$$
$$E = \begin{pmatrix} \epsilon_{1,1} & \dots & \epsilon_{1,n} \\ \vdots & \ddots & \vdots \\ \epsilon_{\ell,1} & \dots & \epsilon_{\ell,n} \end{pmatrix} $$
such that $0\leq \ell \leq q$, $\epsilon_{i,j}\in \mu_{r_j}$. Define
\begin{align*}
\Hh_{G,g}(\vec{k},E) = \{C\in\Hh_{G,g} : &  \deg(F_j)\equiv k_j \Mod{r_j}, \chi_{r_j}(F_j(x_i))=\epsilon_{i,j},\\
&  i=1,\dots,\ell, j=1,\dots,n\}.
\end{align*}

Then we get

\begin{thm}\label{mainthm2}
Let $G$ be any abelian group and $q\equiv 1 \Mod{\exp(G)}$. If there exists some $(d(\vec{\alpha}))_{\vec{\alpha}\in\R}$ such that
$$2g+2|G|-2 = \sum_{\vec{\alpha}\in\R} \left(|G|-\frac{|G|}{e(\vec{\alpha})}\right) d(\vec{\alpha}) + |G|-\frac{|G|}{e(\vec{k})} $$
where
$$d_j = \sum_{\vec{\alpha}\in\R}\alpha_jd(\vec{\alpha}) \equiv k_j \mod{r_j}$$
then
$$|\Hh_{G,g}(\vec{k},E)| = \sum_{j=1}^{\eta} P_{j;\vec{k},E}(2g)q^{\frac{2g+2|G|-2}{|G|-\frac{|G|}{s_j}}} + O\left(q^{\frac{(1+\epsilon)g}{|G|-\frac{|G|}{s_1}}} \right) $$
where $1=s_0<s_1<\dots<s_{\eta}=r_n$ are the divisors of $r_n$ and $P_{j;\vec{k},E}$ is a quasi-polynomial of degree at most $\phi_G(s_j)-1$. If there no such solution exists then $|\Hh_{G,g}(\vec{k},E)|=0$.
\end{thm}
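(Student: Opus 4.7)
The plan is to mirror the strategy used for Theorem \ref{mainthm1}, isolating where the data $(\vec{k},E)$ enters the count and showing that it only perturbs finitely many local Euler factors. By the Kummer-theoretic setup preceding the statement, any $C\in\Hh_{G,g}(\vec{k},E)$ corresponds to a tuple $(F_1,\dots,F_n)$ of $r_j$-th power free polynomials in $\Ff_q[X]$, and the stratification
$$\Hh_{G,g} = \bigcup_{\vec{d}(\vec{\alpha})} \Hh^{\vec{d}(\vec{\alpha})}$$
from \cite{M1} refines to
$$\Hh_{G,g}(\vec{k},E) = \bigcup_{\vec{d}(\vec{\alpha})} \Hh^{\vec{d}(\vec{\alpha})}(\vec{k},E),$$
where each stratum records the joint factorization pattern of $(F_1,\dots,F_n)$: exactly $d(\vec{\alpha})$ places have local ramification type $\vec{\alpha}$. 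The Riemann--Hurwitz formula translates to the genus identity in the statement, and the identity $\deg(F_j) = \sum_{\vec{\alpha}} \alpha_j d(\vec{\alpha})$ modulo $r_j$ forces the congruence $d_j\equiv k_j\pmod{r_j}$. If no $\vec{d}(\vec{\alpha})$ satisfies both constraints then $\Hh_{G,g}(\vec{k},E)$ is empty, yielding the last clause.

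Next, for each admissible $\vec{d}(\vec{\alpha})$, I would count $|\Hh^{\vec{d}(\vec{\alpha})}(\vec{k},E)|$ by the same Dirichlet-series/Euler-product apparatus used to prove Theorem \ref{mainthm1}. The key observation is that the additional data $(\vec{k},E)$ modifies only $\ell+1$ many local factors: the character constraints $\chi_{r_j}(F_j(x_i))=\epsilon_{i,j}$ replace the degree-one Euler factor at each $x_i$ ($i=1,\dots,\ell$) by its restriction to characters taking the prescribed value, while the degree conditions $\deg(F_j)\equiv k_j\pmod{r_j}$ play the analogous role at $x_{q+1}$. Because these modifications are holomorphic and occur at only finitely many places, the locations and orders of the poles of the generating series coincide with those arising in Theorem \ref{mainthm1}; only the leading residues change, now carrying dependence on $\vec{k}$ and $E$ through the $\epsilon_{i,j}$ and the $k_j$.

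The third step is to extract asymptotics by Perron's formula, shifting the contour past the poles at $|u|=q^{-1/(|G|-|G|/s_j)}$ for $j=\eta,\eta-1,\dots,1$. Each pole is of order at most $\phi_G(s_j)$, and produces a term of the form $P_{j;\vec{k},E}(2g)\,q^{(2g+2|G|-2)/(|G|-|G|/s_j)}$ where $P_{j;\vec{k},E}$ is a quasi-polynomial of degree at most $\phi_G(s_j)-1$; the quasi-periodic coefficients reflect the roots of unity introduced by the modified local factors, as well as the residue classes $k_j\pmod{r_j}$. Summing over all $\vec{d}(\vec{\alpha})$ compatible with the genus and congruence constraints gives the claimed formula, with the error term coming from the same contour shift as in Theorem \ref{mainthm1}.

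The main obstacle I expect is bookkeeping: to keep the bound $\deg P_{j;\vec{k},E}\leq \phi_G(s_j)-1$ one must show that the modifications of the local factors do not \emph{raise} the order of any pole, and to recognize that for some $(\vec{k},E)$ the leading residue may vanish (so the degree can strictly drop). This is the essential reason the statement weakens the equality $\deg P_1 = \phi_G(s_1)-1$ from Theorem \ref{mainthm1} to a bound for all $j$. A secondary technical point is that restricting the sum to $d_j\equiv k_j\pmod{r_j}$ only selects a residue class of tuples, and one has to verify that the main-order contribution survives this selection; this follows because the congruence is compatible with the asymptotic of the total in Theorem \ref{mainthm1} for at least one admissible class, namely any class for which an admissible $\vec{d}(\vec{\alpha})$ exists.
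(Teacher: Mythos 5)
Your outline is missing the step that the paper spends Sections \ref{curvesec} and \ref{inex} on, and without it the argument computes the wrong quantity. You identify $\Hh_{G,g}(\vec{k},E)$ with the union over admissible $\vec{d}(\vec{\alpha})$ of the sets of tuples $(f_{\vec{\alpha}})$ with the prescribed factorization pattern, and then count those tuples by an Euler product. But a tuple in $\F_{\vec{d}(\vec{\alpha})}$ whose support $\R^*=\{\vec{\alpha}: d(\vec{\alpha})\neq 0\}$ generates only a proper subgroup $H\subsetneq G$ corresponds to a curve with $\Gal(C)=H$ and genus $\frac{g-1}{|G|/|H|}+1$, not to an element of $\Hh_{G,g}(\vec{k},E)$ (e.g.\ for $G=\Z/4\Z$ the stratum $(0,d_2,0)$ gives $Y^4=f_2^2$, which is really the quadratic curve $Y^2=f_2$). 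So the generating series $F_{\vec{k},E}(z)$ counts the larger set $M_{\vec{k},E}(G,g)$ of Proposition \ref{curveprop1}/Corollary \ref{curvecor1}. This overcount cannot be absorbed into the error term: a short computation with the genus relation shows that the contribution of a proper subgroup $H$ sits at the \emph{same} exponential order $q^{(2g+2|G|-2)/(|G|-|G|/s_j)}$ as the main terms (only the polynomial degree drops from $\phi_G(s_j)-1$ to $\phi_H(s_j)-1$, and these can coincide). The paper's fix is a M\"obius inversion over the lattice of subgroups of $G$ using Delsarte's M\"obius function for abelian groups (Lemma \ref{inexlem}), which expresses $|\Hh^*_{G,g}(\vec{k},E)|$ as $\sum_{H\subset G}\mu(G/H)|M_{\vec{k},E}(H,\cdot)|$ and only then applies the contour-integral count (Proposition \ref{rescalcprop1}) to each $H$. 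Alternatively one could try to restrict the generating series to strata whose support generates $G$, but that destroys the Euler product your second step relies on.

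A secondary, less serious point: the conditions $\deg(F_j)\equiv k_j\pmod{r_j}$ are not implemented in the paper as a modified local factor at $x_{q+1}$; they are detected by root-of-unity weights $\xi_{r_n}^{\vec{t}\cdot\vec{\alpha}\deg(f_{\vec{\alpha}})}$ which rotate the poles of $Z_K$ to $z=\xi_{c_i}^k(q\xi_{r_n}^a)^{-1/c_i}$ — this is precisely the source of the quasi-periodicity of the coefficients, so your mechanism for where the quasi-polynomials come from is slightly off, though the conclusion (degree at most $\phi_G(s_j)-1$, with possible drop from vanishing residues) matches the paper's.
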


Again, if $G=(\Z/Q\Z)^n$, then we can say more

\begin{cor}\label{mainthm2cor}

If $G=(\Z/Q\Z)^n$, for $Q$ a prime, $q\equiv 1 \mod{Q}$ and $2g+2Q^n-2\equiv 0 \Mod{Q^n-Q^{n-1}}$ then
$$|\Hh_{(\Z/Q\Z)^n,g}(\vec{k},E)| = P_{\vec{k},E}\left(\frac{2g+2Q^n-2}{Q^n-Q^{n-1}}\right) q^{\frac{2g+2Q^n-2}{Q^n-Q^{n-1}}} + O\left(q^{\frac{(1+\epsilon)g}{Q^n-Q^{n-1}}} \right)$$
where $P_{\vec{k},E}$ is a polynomial of degree $Q^n-2$ with leading coefficient
$$\frac{(q-1)^n}{Q^n(Q^n-2)!}\frac{L_{Q^n-2}}{\zeta_q(2)^{Q^n-1}}\left(\frac{q}{Q^n(q+Q^n-1)}\right)^{\ell}.$$
If $2g+2Q^n-2\not\equiv 0 \Mod{Q^n-Q^{n-1}}$ then $|\Hh_{(\Z/Q\Z)^n,g}(\vec{k},E)|=0$.

\end{cor}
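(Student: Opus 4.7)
The plan is to deduce Corollary \ref{mainthm2cor} by specializing Theorem \ref{mainthm2} to $G = (\Z/Q\Z)^n$. Since $\exp(G) = Q$ is prime, the divisors of $r_n = Q$ are just $\{1, Q\}$, so $\eta = 1$ and $s_1 = Q$, and the main term collapses to a single expression $P_{\vec k, E}(2g)\, q^{(2g+2Q^n-2)/(Q^n - Q^{n-1})}$. Moreover, every $\vec\alpha \in \R = [0,Q-1]^n \setminus \{\vec 0\}$ has $e(\vec\alpha) = Q$ because at least one coordinate of $\vec\alpha$ is coprime to $Q$. The diophantine equation in Theorem \ref{mainthm2} thus reduces to
$$2g + 2Q^n - 2 \;=\; (Q^n - Q^{n-1})\sum_{\vec\alpha \in \R} d(\vec\alpha) \;+\; \delta_{\vec k}(Q^n - Q^{n-1}),$$
where $\delta_{\vec k}\in\{0,1\}$ equals $0$ if $\vec k \equiv \vec 0 \pmod{Q}$ and $1$ otherwise. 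This admits a nonnegative solution compatible with the congruence $\vec d \equiv \vec k \pmod Q$ if and only if $2g + 2Q^n - 2 \equiv 0 \pmod{Q^n - Q^{n-1}}$, giving the vanishing statement.

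Next I would upgrade the quasi-polynomial $P_{\vec k,E}$ of degree at most $\phi_G(Q) - 1 = Q^n - 2$ to an honest polynomial of exact degree $Q^n - 2$. The periodic coefficients in the general statement of Theorem \ref{mainthm2} arise from orbit counting over nontrivial proper subgroups of $G$; when $G$ is elementary abelian, every such subgroup is itself of type $(\Z/Q\Z)^m$ and the orbit lengths are constant in $g$, so the periodic coefficients degenerate to constants. The degree is exactly $Q^n - 2$ since the top-order contribution is a simplex sum over the $Q^n - 1$ variables $d(\vec\alpha)$ constrained by a single linear equation.

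For the leading coefficient, I would track the main term through the proof of Theorem \ref{mainthm2}. Writing
$$|\Hh_{G,g}(\vec k, E)| \;=\; \sum_{(d(\vec\alpha))} M_{d(\vec\alpha); \vec k, E},$$
where $M_{d(\vec\alpha); \vec k, E}$ counts $n$-tuples of $Q$-th-power-free polynomials $(F_1, \ldots, F_n)$ with prescribed factorization multiplicities and character data, a tauberian analysis of the attached Dirichlet series yields
$$M_{d(\vec\alpha); \vec k, E} \;\sim\; c(\vec k, E)\, q^{\sum d(\vec\alpha)},$$
with $c(\vec k, E)$ factoring into a global density $L_{Q^n-2}/\zeta_q(2)^{Q^n - 1}$ that enforces pairwise coprimality and squarefreeness of the $Q^n - 1$ prime-factor groups, a global leading-coefficient factor $(q-1)^n / Q^n$ from equidistribution of top coefficients modulo $Q$-th powers, and local factors at $x_1, \dots, x_\ell$ each equal to $q/(Q^n(q+Q^n-1))$ arising from character orthogonality conditioned on $F_j(x_i) \neq 0$. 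Summing over lattice points $(d(\vec\alpha))$ with fixed total and mod-$Q$ constraints on $\vec d$ produces a simplex-volume factor $\sim (2g)^{Q^n-2}/(Q^n-2)!$, assembling into the stated leading coefficient.

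The main obstacle is the precise determination of these local factors. The appearance of $q/(Q^n(q+Q^n-1))$ in place of the naive $1/Q^n$ encodes the interplay between the prescribed character value and the $Q$-th-power-free condition at $x_i$, and must be derived via a careful inclusion-exclusion against $Q$-th-power divisibility at that point; the factor $(q+Q^n-1)$ in the denominator is precisely what is needed so that summing over all admissible $(\vec k, E)$ with $\ell = 0$ recovers the leading coefficient of Corollary \ref{mainthm1cor}, which serves as the decisive consistency check.
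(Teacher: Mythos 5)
Your overall strategy --- specialize Theorem \ref{mainthm2}, observe that every $\vec{\alpha}\in\R$ has $e(\vec{\alpha})=Q$ so the genus relation collapses to a single linear equation, and assemble the leading coefficient as a product of a global density, a leading-coefficient factor, and local factors at $x_1,\dots,x_\ell$ --- is the same as the paper's. But the two points where the corollary genuinely goes beyond Theorem \ref{mainthm2} are not actually established. First, your explanation of why the quasi-polynomial degenerates to a polynomial misidentifies the source of the periodicity: it does not come from ``orbit counting over proper subgroups'' but from the factors $\xi_{c_i}^{-kD}(\xi_{r_n}^{a})^{D/c_i}$ in the residues of $A_{\vec{t},\nu}(z)/z^{D+1}$ at the poles $z=\xi_{c_i}^{k}(q\xi_{r_n}^{a})^{-1/c_i}$ (Lemma \ref{rescalclem1}). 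The paper's device is to use that all $c(\vec{\alpha})$ coincide when $G=(\Z/Q\Z)^n$, renormalize to $c(\vec{\alpha})=1$, and count by the conductor $D=(2g+2Q^n-2)/(Q^n-Q^{n-1})$; then every pole sits at some $(q\xi_Q^{a})^{-1}$, $c_i=1$ kills the $\xi_{c_i}^{-kD}$ factors, and the dominant pole has $a=0$. Your remark that ``orbit lengths are constant in $g$'' does not engage with this mechanism, and without the renormalization the statement you are specializing only hands you a quasi-polynomial in $2g$.

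Second, the local factor $q/(Q^n(q+Q^n-1))$ is asserted via a consistency check rather than derived; in the paper it falls out of the Euler factor of $H_{\vec{0},0}(z)$ at the linear primes $X-x_i$ dividing $h$, namely $\left(1+(Q^n-1)q^{-1}\right)^{-1}=q/(q+Q^n-1)$ at the dominant pole $z=q^{-1}$, times the normalization $(r_1\cdots r_n)^{-1}=Q^{-n}$ per point coming from the indicator function \eqref{indicator1}. Relatedly, to get degree exactly $Q^n-2$ you must rule out cancellation among the $(\vec{t},\nu)$-contributions; the paper does this by noting that $(\Z/Q\Z)^n$ is a vector space over $\Z/Q\Z$, so $\{\vec{\alpha}\in\R_{\nu}:\vec{t}\cdot\vec{\alpha}\equiv a\}$ is a proper subset of $\R$ unless $(\vec{t},\nu,a)=(\vec{0},0,0)$; hence the pole order $Q^n-1$ at $z=q^{-1}$ is attained exactly once and the top coefficient is a single manifestly positive term. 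Finally, your starting identity $|\Hh_{G,g}(\vec{k},E)|=\sum_{(d(\vec{\alpha}))}M_{d(\vec{\alpha});\vec{k},E}$ is precisely the overcount the paper warns against in Section \ref{curvesec}: tuples supported on a proper subgroup $H\cong(\Z/Q\Z)^m$ yield curves with $\Gal(C)=H$, and they must be removed by the M\"obius inversion of Lemma \ref{inexlem}. This does not affect the leading coefficient (those contributions have polynomial degree at most $Q^m-2$), but as written your count is that of $M_{\vec{k},E}(G,g)$, not of $\Hh_{G,g}(\vec{k},E)$.
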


Now, using Corollaries \ref{mainthm1cor} and \ref{mainthm2cor} we can prove a result on the distribution of the number of points for the whole space.

\begin{thm}\label{mainthm3}
Let $G=(\Z/Q\Z)^n$ and fix $q$ such that $q\equiv 1 \Mod{Q}$. If $2g+2Q^n-2\equiv 0\Mod{Q^n-Q^{n-1}}$ then as $g\to\infty$,
$$\frac{|\{C\in\Hh_{(\Z/Q\Z)^n,g} : \#C(\Pp^1(\Ff_q)) = M\}|}{|\Hh_{(\Z/Q\Z)^n,g}|} = \Prob\left(\sum_{i=1}^{q+1} X_i= M\right)\left(1+O\left(\frac{1}{g} \right)\right)$$
where the $X_i$ are $i.i.d.$ random variables taking value $0$, $Q^n$ or $Q^{n-1}$  such that
$$X_i = \begin{cases} Q^{n-1} & \mbox{with probability }  \frac{Q^n-1} {Q^{n-1}(q+Q^n -1)} \\ Q^n & \mbox{with probability }  \frac{q}{Q^n(q+Q^n-1)} \\ 0 & \mbox{with probability } \frac{(Q^n-1)(q+Q^n-Q)}{Q^n(q+Q^n-1)}    \end{cases}.$$
\end{thm}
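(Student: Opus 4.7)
The plan is to write $\#C(\Pp^1(\Ff_q))=\sum_{i=1}^{q+1}X_i(C)$, where $X_i(C)$ counts points of $C$ above $x_i$, and to compute the joint distribution of $(X_1,\dots,X_{q+1})$ as $g\to\infty$. For $G=(\Z/Q\Z)^n$ with $Q$ prime, any nonzero $\vec{\alpha}\in(\Z/Q\Z)^n$ has order $Q$, so every nontrivial inertia subgroup in such a cover is cyclic of order $Q$. Consequently the possible values of $X_i$ are exactly $\{0,Q^{n-1},Q^n\}$: $Q^n$ when $x_i$ is unramified with trivial Frobenius, $Q^{n-1}$ when $x_i$ is ramified with trivial Frobenius in $G/I$, and $0$ otherwise. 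The theorem then follows by computing $\Prob(X_i=v_i\,\forall\,i)$ for each profile and summing over profiles with $\sum v_i=M$.

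The main computation is the ratio of the asymptotic formulas in Corollaries \ref{mainthm2cor} and \ref{mainthm1cor}. Both leading polynomials have degree $Q^n-2$ in the same variable and the same dominant power of $q$, and both error terms are negligible, so
$$\frac{|\Hh_{(\Z/Q\Z)^n,g}(\vec{k},E)|}{|\Hh_{(\Z/Q\Z)^n,g}|}=\frac{\text{lead coeff of }P_{\vec{k},E}}{\text{lead coeff of }P}\left(1+O\left(\frac{1}{g}\right)\right).$$
Plugging in the explicit leading coefficients, the ratio splits multiplicatively over the $\ell$ points at which character values are specified, each contributing the factor $\frac{q}{Q^n(q+Q^n-1)}$. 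This is precisely $\Prob(X_i=Q^n)$ when $E_i=(1,\dots,1)$, and by the $E$-independence of the leading coefficient the same probability applies for any specified nonzero character profile at $x_i$. Summing over the $Q^n$ unramified configurations at $x_i$ recovers the total unramified mass $\frac{q}{q+Q^n-1}$, matching the sum of the $X_i=Q^n$ probability and the unramified part of the $X_i=0$ probability in the statement.

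The remaining task is to account for ramified local data. I would decompose $\Hh_{(\Z/Q\Z)^n,g}$ along its irreducible components $\Hh^{\vec{d}(\vec{\alpha})}$ and, at a fixed ramified point of type $\vec{\alpha}$, observe that the local Frobenius is uniformly distributed over $G/\langle\vec{\alpha}\rangle\cong(\Z/Q\Z)^{n-1}$, so that $X_i=Q^{n-1}$ with conditional probability $Q^{-(n-1)}$. Combining this with the total ramification probability $\frac{Q^n-1}{q+Q^n-1}$ (obtained by complementarity from the unramified mass above) yields $\Prob(X_i=Q^{n-1})=\frac{Q^n-1}{Q^{n-1}(q+Q^n-1)}$ and the remaining ramified contribution to $\Prob(X_i=0)$, completing the claimed marginal. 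The product form of the leading coefficient in Corollary \ref{mainthm2cor} propagates to asymptotic independence of the $X_i$, and summing over $(v_i)$ with $\sum v_i=M$ gives the theorem. The chief obstacle is incorporating ramified local data (which Corollary \ref{mainthm2cor} cannot specify directly, since it only handles nonzero $\epsilon_{i,j}\in\mu_{r_j}$) while preserving the uniform $O(1/g)$ error throughout all profile counts.
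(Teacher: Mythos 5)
Your proposal is correct and follows essentially the same route as the paper: the paper likewise divides the asymptotic of Corollary \ref{mainthm2cor} by that of Corollary \ref{mainthm1cor}, uses the $\vec{k},E$-independence of the leading coefficient to get the per-point factor $\frac{q}{Q^n(q+Q^n-1)}$ for prescribed unramified character data at all $q+1$ points, and then defers the ramified local data and the passage from character profiles to point counts to the arguments of \cite{M1},\cite{M2}, exactly as you sketch. The marginal probabilities you derive (including the complementarity computation for the ramified mass and the $Q^{-(n-1)}$ Frobenius condition) agree with the statement.
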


\begin{rem}

The proof of Theorem \ref{mainthm3} follows directly from Corollaries \ref{mainthm1cor} and \ref{mainthm2cor} and the work done in \cite{M1} and \cite{M2}. Therefore, if we were able to determine the leading coefficients of $P_1$ in Theorem \ref{mainthm1} and $P_{1,\vec{k},E}$ in Theorem \ref{mainthm2}, then an analogous result as Theorem \ref{mainthm3} would follow from the work done in \cite{M1} and \cite{M2}.

\end{rem}

\begin{rem}

The random variables appearing in Theorem \ref{mainthm3} are the same that appear in \eqref{irresult} in the case $G=(\Z/Q\Z)^n$.

\end{rem}

\begin{rem}

Bucur, David, Feigon, Kaplan, Lalin, Ozman and Wood \cite{BDFK+} prove analogous results to Corollary \ref{mainthm2cor}, and Theorem \ref{mainthm3} for $G=\Z/Q\Z$.

\end{rem}


\section{Notation and Setup}

From now on, we will assume that $G=\Z/r_1\Z\times\dots\times\Z/r_n\Z$ such that $r_j|r_{j+1}$ and $q\equiv1 \Mod{r_n}$. Under these assumptions we can apply Kummer theory (Chap.14 Proposition 37 of \cite{DF}) to find $F_j\in\Ff_q[X]$, $r_j^{th}$-power free for $j=1,\dots,n$ such that
$$K(C) = K\left(\sqrt[r_1]{F_1(X)},\dots,\sqrt[r_n]{F_n(X)}\right).$$

Let
$$\Hh^*_{G,g} = \{C\in\Hh_{G,g} : F_j \mbox{ is monic}\}$$
$$\Hh^*_{G,g}(\vec{k},E) = \{C \in \Hh_{G,g}(\vec{k},E): F_j \mbox{ is monic}\}.$$
We call curves in $\Hh^*_{G,g}$ \textit{monic}.

Now for each $F_j$, it's leading coefficient can be chosen from any of the equivalence classes of $\Ff_q^*/(\Ff_q^*)^{r_j}$ to give a different extension (and thus curve). Therefore we see that $|\Hh_{G,g}| = |G||\Hh^*_{G,g}|$ and $|\Hh_{G,g}(\vec{k},E)| = |G||\Hh_{G,g}(\vec{k},E)|$. Therefore, we will work with $\Hh^*_{G,g}$ and $\Hh^*_{G,g}(\vec{k},E)$ from now on.

Now, define
$$\R = [0,\dots,r_1-1]\times\dots\times[0,\dots,r_n-1]\setminus\{(0,\dots,0)\}$$
to be set of integer valued vectors denoted as $\vec{\alpha}=(\alpha_1,\dots,\alpha_n)$ such that not all of them are $0$. Denote
$$\R' = \R\cup\{(0,\dots,0)\}.$$
Then for every $\vec{\alpha}\in\R$ let
$$f_{\vec{\alpha}} = \prod_{\substack{P \\ v_P(F_j)=\alpha_j}} P$$
where the product is over prime polynomials of $\Ff_q[X]$. Then we can write
$$F_j = \prod_{\vec{\alpha}\in\R}f_{\vec{\alpha}}^{\alpha_j}$$
where we use the convention that $f^0$ is identically the constant polynomial $1$. Moreover, all the $f_{\vec{\alpha}}$ are squarefree and pairwise coprime.

In \cite{M2}, the author uses the Riemann-Hurwitz formula (Theorem 7.16 of \cite{rose}) to show that the genus of $C$ satisfies the relation
\begin{align}\label{genform}
2g+2|G|-2 = \sum_{\vec{\alpha}} \left(|G|-\frac{|G|}{e(\vec{\alpha})}\right) \deg(f_{\vec{\alpha}}) + |G|-\frac{|G|}{e(\vec{d})}
\end{align}
where $\vec{d}=(d_1,\dots,d_n)=(\deg(F_1),\dots,\deg(F_n))$ and for any vector $\vec{v}=(v_1,\dots,v_n)$,
\begin{align}\label{ramind}
e(\vec{v}) := \underset{j=1,\dots,n}{\lcm}\left(\frac{r_j}{(r_j,v_j)}\right).
\end{align}

Notice, that the genus only depends on the degree of the $f_{\vec{\alpha}}$ and the congruence class of the $d_j$ modulo $r_j$. Therefore, define
\begin{align}
\F_d = \{f \in \Ff_q[X]: f \mbox{ is monic, squarfree and } \deg(f)=d\}
\end{align}
\begin{align}\label{Fset1}
\F_{\vec{d}(\vec{\alpha})} = \{(f_{\vec{\alpha}})\in\prod_{\vec{\alpha}\in\R} \F_{d(\vec{\alpha})} : (f_{\vec{\alpha}},f_{\vec{\beta}})=1 \mbox{ for all } \vec{\alpha}\not=\vec{\beta}\in\R \}
\end{align}
where $\vec{d}(\vec{\alpha})=(d(\vec{\alpha}))_{\vec{\alpha}\in\R}$ is a vector of non-negative integers indexed by the vectors of $\R$.

Now for any $\vec{k}=(k_1,\dots,k_n)\in\R'$ consider the congruence conditions
\begin{align}\label{equiv1}
\sum_{\vec{\alpha}\in\R} \alpha_jd(\vec{\alpha}) \equiv k_j \mod{r_j}, j=1,\dots,n.
\end{align}
Further, let $E$ be an $\ell\times n$ matrix such that
\begin{align}\label{matrix}
E = \begin{pmatrix} \epsilon_{1,1} & \dots & \epsilon_{1,n} \\ \vdots & \ddots & \vdots \\ \epsilon_{\ell,1} & \dots & \epsilon_{\ell,n} \end{pmatrix}
\end{align}
with $\epsilon_{i,j}\in\mu_{r_j}$. Now, define
\begin{align}\label{Fset2}
\F_{\vec{d}(\vec{\alpha}); \vec{k},E} = \begin{cases} \{(f_{\vec{\alpha}})\in\F_{\vec{d}(\vec{\alpha})} : \chi_{r_j}(F_j(x_i))=\epsilon_{i,j}, i=1,\dots,\ell,j=1,\dots,n\} & \eqref{equiv1} \mbox{ is satisfied} \\ \emptyset & \mbox{otherwise} \end{cases}
\end{align}

Finally, define
\begin{align}\label{Fset3}
\F_{D;\vec{k},E} = \bigcup_{\vec{d}(\vec{\alpha})} \F_{\vec{d}(\vec{\alpha});\vec{k},E}
\end{align}
where the union is over all $\vec{d}(\vec{\alpha})$ that satisfies
\begin{align}\label{genform2}
D = \sum_{\vec{\alpha}\in\R} c(\vec{\alpha})d(\vec{\alpha})
\end{align}
where $c(\vec{\alpha})$ are some fixed constants.

If we set $c(\vec{\alpha}) = |G|-\frac{|G|}{e(\vec{\alpha})}$ and $D = 2g+2|G|-2-c(\vec{k})$, then we see that \eqref{genform2} becomes \eqref{genform}. Outside of Section \ref{curvesec}, we will work with arbitrary $c(\vec{\alpha})$ with the idea that eventually we will set them equal to what we need.

Therefore, it seems as if what we need to do is determine the size of $\F_{D;\vec{k},E}$ for all $D,\vec{k},E$. Unfortunately, this will actually count too many curves! However, there is still a way to determine the size of $\Hh^*_{G,g}$ and $\Hh^*_{G,g}(\vec{k},E)$ if we know $|\F_{D;\vec{k},E}|$.


\section{Too Many Curves}\label{curvesec}

Ideally, we would like to say that every monic curve, $C$, such that $\Gal(C)=G$, $g(C)=g$, comes from an element $\F_{\vec{d}(\vec{\alpha})}$ such that $\vec{d}(\vec{\alpha})$ satisfies \eqref{genform}. Unfortunately, this is not true.

For example, if we consider the set $\F_{(0,d_2,0)}$ such that $2g+6=2d_2$ and $2d_2\equiv 0\Mod{4}$. Then $(0,d_2,0)$ satisfies \eqref{genform} for $G=\Z/4\Z$ and we would hope that this would correspond to a curve with $\Gal(C)=\Z/4\Z$ and $g(C)=g$. However, an element of $\F_{(0,d_2,0)}$ would look like $(1,f_2,1)$ where $f_2$ is a square-free polynomial of degree $d_2$. This would correspond to a curve with affine model $Y^4 = f_2^2$, which clearly has $K(C)=K(\sqrt{f_2})$ and so $\Gal(C)=\Z/2\Z$. Moreover,
$$g(C) = \frac{d_2-2}{2} = \frac{g+3-2}{2} = \frac{g-1}{2}+1.$$

It is easy to see how this argument can be extended to any group $G$ that does not have prime order. Indeed, what we will show in this section is that the elements of $\F_{\vec{d}(\vec{\alpha})}$ correspond to monic curves whose Galois group is a \textit{subgroup} of $G$.

\begin{rem}\label{curverem}

When we talk about all the subgroups of $G$, we mean all the different possible subsets of $G$ that are subgroups of $G$. That is, two subgroups $H,H'\subset G$ are said to be the same subgroup if and only if they are equal as subsets. For example, if $G=\Z/Q\Z\times\Z/Q\Z$, then the subgroups
$$\{(a,0): 0\leq a \leq Q-1\}$$
$$\{(0,a): 0\leq a \leq Q-1\}$$
$$\{(a,a): 0\leq a \leq Q-1\}$$
are all different even though they are all isomorphic to $\Z/Q\Z$.

\end{rem}

For simplicity, in this section, we will fix $c(\vec{\alpha})=|G|-\frac{|G|} {e(\vec{\alpha})}$.

\begin{prop}\label{curveprop1}
Let
$$M(G,g) = \{C, \mbox{ monic } : \Gal(C)=H\subset G, g(C)=\frac{g-1}{|G|/|H|}+1\}.$$
Then there is a natural bijection from elements of
$$\bigcup_{\vec{d}(\vec{\alpha})} \F_{\vec{d}(\vec{\alpha})} $$
to $M(G,g)$ where the union is over all $\vec{d}(\vec{\alpha})$ that satisfies \eqref{genform}.
\end{prop}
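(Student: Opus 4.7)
The plan is to exhibit an explicit map $\Phi$ sending $(f_{\vec{\alpha}})$ to the curve $C$ with function field $K(\sqrt[r_1]{F_1},\dots,\sqrt[r_n]{F_n})$, where $F_j := \prod_{\vec{\alpha}\in\R} f_{\vec{\alpha}}^{\alpha_j}$, and then verify that $\Phi$ is a bijection onto $M(G,g)$. Since $\alpha_j<r_j$ and the $f_{\vec{\alpha}}$ are pairwise coprime squarefree polynomials, each $F_j$ is monic and $r_j$-th power free, so Kummer theory (applicable because $q\equiv 1\Mod{r_n}$) defines the extension, and $\Gal(K(C)/K)$ sits naturally as a subgroup $H\subseteq G$: explicitly, $H$ is the subgroup of $\vec{\beta}\in G$ for which the naive Kummer action $\sigma_{\vec{\beta}}(\sqrt[r_j]{F_j})=\zeta_{r_j}^{\beta_j}\sqrt[r_j]{F_j}$ extends to a well-defined automorphism of $L:=K(\sqrt[r_1]{F_1},\dots,\sqrt[r_n]{F_n})$.

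The heart of the argument is the genus computation. Any finite prime $P$ divides at most one $f_{\vec{\alpha}}$ (by pairwise coprimality), in which case $v_P(F_j)=\alpha_j$ for every $j$, so the local ramification index at $P$ in $L/K$ is $e(\vec{\alpha})$; the infinite place has ramification index $e(\vec{d})$ with $d_j=\deg F_j=\sum_{\vec{\alpha}\in\R}\alpha_j d(\vec{\alpha})$. Applying the tame Riemann-Hurwitz formula to $L/K$ with Galois group of order $|H|$ gives
$$2g(C)+2|H|-2 = \sum_{\vec{\alpha}\in\R}\left(|H|-\frac{|H|}{e(\vec{\alpha})}\right)d(\vec{\alpha}) + \left(|H|-\frac{|H|}{e(\vec{d})}\right).$$
Setting $m:=|G|/|H|$ and multiplying through by $m$ turns the right-hand side into the right-hand side of \eqref{genform}, which equals $2g+2|G|-2$; a short rearrangement yields $g(C)=(g-1)/m+1$, precisely the condition defining membership in $M(G,g)$.

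For the reverse direction, each $C\in M(G,g)$ is determined by the specific subgroup $H\subseteq G$ realizing its Galois group: the projections $\chi_j:G\to\mu_{r_j}$ restrict to characters of $H$, and Kummer theory assigns to $\chi_j|_H$ a unique monic, $r_j$-th power free $F_j\in\Ff_q[X]$ with $K(\sqrt[r_j]{F_j})$ equal to the fixed field of $\ker(\chi_j|_H)$. From the $F_j$'s I read off $f_{\vec{\alpha}}:=\prod_{P:\,(v_P(F_1),\dots,v_P(F_n))=\vec{\alpha}}P$, ranging over prime polynomials of $\Ff_q[X]$; coprimality and squarefreeness are automatic, and reversing the Riemann-Hurwitz computation forces $\vec{d}(\vec{\alpha})$ to satisfy \eqref{genform}. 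The main obstacle is the bookkeeping in this reverse step: when $H\subsetneq G$ the $F_j$'s exhibit degeneracies---as in the $Y^4=f_2^2$ example from the section preamble, where the apparent $\Z/4\Z$-cover collapses to a $\Z/2\Z$-cover---so one must check carefully that specifying $H$ as a particular subset of $G$ (not merely as an abstract isomorphism class; cf.\ Remark \ref{curverem}) pins down a unique $(F_j)$ among the several possible Kummer presentations of the same function field. Once this canonical recovery of $(F_j)$ from the pair $(C,H)$ is in hand, the bijection and all remaining verifications are routine.
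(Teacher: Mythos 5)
Your forward direction is sound and, if anything, cleaner than the paper's: you apply Riemann--Hurwitz directly to $L/K$ with group $H=\Gal(L/K)$ and ramification indices $e(\vec{\alpha})$, and the identity $m\bigl(|H|-\tfrac{|H|}{e}\bigr)=|G|-\tfrac{|G|}{e}$ with $m=|G|/|H|$ immediately converts \eqref{genform} into $g(C)=\frac{g-1}{m}+1$. (The paper instead spends most of its proof identifying $H$ explicitly as the subgroup of $G$ generated by $\{\vec{\alpha}:d(\vec{\alpha})\neq 0\}$ and rewriting everything in terms of an $H$-presentation $(F_j^*)$; you never establish that your $H$ equals this subgroup, which is harmless for the proposition as literally stated but is relied on later, when $\R_H$ is identified with the set of ramification vectors that can occur for curves with group $H$.)

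The genuine gap is in your reverse direction, and it is largely self-inflicted. You treat an element of $M(G,g)$ as an abstract function field together with a subgroup $H\subset G$ and then try to reconstruct the tuple $(F_1,\dots,F_n)$ from the restricted characters $\chi_j|_H$. That reconstruction fails as stated: Kummer theory does \emph{not} assign a unique monic $r_j$-th power free $F_j$ to the fixed field of $\ker(\chi_j|_H)$ --- already for $H=G=\Z/Q\Z$ the polynomials $F$ and (the reduction of) $F^a$ with $(a,Q)=1$ are distinct monic $Q$-th power free polynomials generating the same field --- so pinning down $(F_j)$ requires the full character data, not just the fixed fields, and you explicitly leave this ``careful check'' undone even though you flag it as the main obstacle. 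In the paper's setup the problem does not arise: a ``monic curve'' is by definition given together with its Kummer data $(F_1,\dots,F_n)$ (this is the only way conditions such as ``$F_j$ monic'' or ``$\deg F_j\equiv k_j$'' are meaningful, and it is why $|\Hh_{G,g}|=|G||\Hh^*_{G,g}|$), so the inverse map is simply $f_{\vec{\alpha}}=\prod_{P:\,v_P(F_j)=\alpha_j\ \forall j}P$, and the only thing left to verify is that the resulting degree vector satisfies \eqref{genform}, which is your own Riemann--Hurwitz computation read backwards. Replacing your reconstruction-from-characters step by this observation closes the proof.
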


\begin{proof}

Let $(f_{\vec{\alpha}}) \in \F_{\vec{d}(\vec{\alpha})}$ for some  fixed $\vec{d}(\vec{\alpha})$ that satisfies \eqref{genform}. Define
$$\R^* = \{\vec{\alpha}\in\R: d(\vec{\alpha})\not=0\}.$$
For every $\vec{\alpha}\in\R$, we can identify it as an element in $G$ in the natural way. Let $H\subset G$ be the subgroup that is generated by $\R^*$ under this identification. (From now on, in this proof, we will identify elements of $H$ and $G$ with elements of $\R$). We will show that $\Gal(C)=H$.

There exists some $s_j|r_j$ (where, potentially, some of the $s_j=1$) such that
$$H\cong \Z/s_1\Z \times \dots \times \Z/s_n\Z.$$

Let $\vec{\alpha}_j \in H$ be a generating set of $H$ such that the order of $\alpha_j$ is $s_j$. Therefore, if $\vec{\alpha}\in\R^*$, we can find $\alpha_j^*$ such that $0\leq \alpha_j^*\leq s_j-1$ and
$$\vec{\alpha} = \sum_{j=1}^n \alpha_j^*\vec{\alpha}_j.$$

If we let $\vec{\alpha}_j=(\alpha_{j,1},\dots,\alpha_{j,n})$ then for all $\vec{\alpha}\in\R^*$,
$$\alpha_k = \sum_{j=1}^n \alpha_j^*\alpha_{j,k}.$$

Now, a basis element of $K(C) = K\left(\sqrt[r_1]{F_1(X)}, \dots, \sqrt[r_n]{F_n(X)} \right)$ will be
$$\prod_{k=1}^n F_k(X)^{\frac{m_k}{r_k}} = \prod_{\vec{\alpha}\in\R^*} f_{\vec{\alpha}}^{\sum_{k=1}^n \frac{\alpha_km_k}{r_k}} = \prod_{\vec{\alpha}\in\R^*} f_{\vec{\alpha}}^{\sum_{k=1}^n \frac{m_k}{r_k} \sum_{j=1}^n \alpha_j^*\alpha_{j,k} } $$
for some values of $m_k,k=1,\dots,n$. (Note, we can restrict the product down to the $\vec{\alpha}\in\R^*$ for if $\vec{\alpha}\not\in\R^*$, then $\deg(f_{\vec{\alpha}})=0$ and hence $f_{\vec{\alpha}}=1$.) Therefore, we can define an action by $h=(h_1,\dots,h_n)\in H$ on the basis elements by
$$h\left(\prod_{k=1}^n F_k(X)^{\frac{m_k}{r_k}}\right) = \prod_{\vec{\alpha}\in\R^*} f_{\vec{\alpha}}^{\sum_{k=1}^n \frac{m_k}{r_k} \sum_{j=1}^n h_j\alpha_j^*\alpha_{j,k} }.$$
Therefore if $h\not=(0,\dots,0)$, there will be a $\vec{\alpha}\in\R^*$ such that
$$\sum_{k=1}^n \frac{m_k}{r_k} \sum_{j=1}^n h_j\alpha_j^*\alpha_{j,k}\not\in\Z.$$
Hence, every non-trivial element of $H$ gives a non-trivial automorphism of $K(C)$ and $H\subset \Gal(K(C)/K) = \Gal(C)$.

Define
$$F_j^* = \prod_{\vec{\alpha}\in\R^*}f_{\vec{\alpha}}^{\alpha_j^*}, j=1,\dots,n.$$

Since $\vec{\alpha}_j$ has order $s_j$ we get $s_j(\vec{\alpha}_j) = (0,\dots,0)$. Therefore, $s_j\alpha_{j,k} \equiv 0 \Mod{r_k}$ and we can find $\alpha'_{j,k}$ such that
$$\alpha_{j,k} = \frac{r_k}{(s_j,r_k)}\alpha_{j,k}'.$$
Therefore,
$$\sqrt[r_k]{F_k(X)} =  \prod_{\vec{\alpha}\in\R^*}f_{\vec{\alpha}}^{\alpha_k/r_k} = \prod_{\vec{\alpha}\in\R^*}f_{\vec{\alpha}}^{\frac{1}{r_k}\sum_{j=1}^n \alpha_j^*\alpha_{j,k}} = \prod_{j=1}^n \left( \prod_{\vec{\alpha}\in\R^*}f_{\vec{\alpha}}^{\alpha_j^*} \right)^{\alpha_{j,k}/r_k}$$
$$ = \prod_{j=1}^n \left( \prod_{\vec{\alpha}\in\R^*}f_{\vec{\alpha}}^{\alpha_j^*} \right)^{\alpha_{j,k}'/(s_j,r_k)} = \prod_{j=1}^n \left(\sqrt[s_j]{F^*_j(X)}\right)^{\alpha_{j,k}'s_j/(s_j,r_k)}. $$
Hence,
$$K\left(\sqrt[r_1]{F_1(X)},\dots,\sqrt[r_n]{F_n(X)}\right) \subset K\left(\sqrt[s_1]{F^*_1(X)},\dots,\sqrt[s_n]{F^*_n(X)} \right).$$

Clearly $\Gal\left(K\left(\sqrt[s_1]{F^*_1(X)},\dots,\sqrt[s_n]{F^*_n(X)} \right)/K\right) \subset H$. Thus $\Gal(C)\subset H$ and therefore $\Gal(C)=H$.

It remains to show that $g(C)=\frac{g-1}{|G|/|H|}+1$.

Recall, $e(\vec{\alpha}) = \lcm\left(\frac{r_j}{(r_j,\alpha_j)}\right)$. Then $e(\vec{\alpha})$ will be the order of $\vec{\alpha}$ as viewed as an element in $G$. Therefore, if $\vec{\alpha}\in\R^*$, then
$$e(\vec{\alpha}) = \lcm\left(\frac{r_j}{(r_j,\alpha_j)}\right) = \lcm\left(\frac{s_i}{(s_i,\alpha^*_i)}\right):=e^*(\vec{\alpha})$$
since $e^*(\vec{\alpha})$ would the order of $\vec{\alpha}^*$ as viewed as an element in $H$ (which would be the same as $\vec{\alpha}$ in $G$). Therefore,
$$c(\vec{\alpha}) = |G|-\frac{|G|}{e(\vec{\alpha})} = \frac{|G|}{|H|}\left(|H|-\frac{|H|} {e^*(\vec{\alpha})}\right):= \frac{|G|}{|H|} c^*(\vec{\alpha}).$$
Likewise, if we define $d_j^* = \deg(F_j^*) =\sum_{\vec{\alpha}\in\R^*} \alpha^*_jd(\vec{\alpha})$, then $e^*(\vec{d}^*)=e(\vec{d})$ and $\frac{|G|}{|H|}c^*(\vec{d}^*)=c(\vec{d})$.

Since $\vec{d}(\vec{\alpha})$ satisfies \eqref{genform}, we have
$$2g+2|G|-2 =\sum_{\vec{\alpha}\in\R}c(\vec{\alpha})d(\vec{\alpha}) +c(\vec{d}) = \sum_{\vec{\alpha}\in\R^*}c(\vec{\alpha})d(\vec{\alpha}) +c(\vec{d})$$
$$ = \frac{|G|}{|H|}\left(\sum_{\vec{\alpha}\in\R^*}c^*(\vec{\alpha})d(\vec{\alpha}) +c^*(\vec{d}^*)\right).$$
That is,
$$\left(\sum_{\vec{\alpha}\in\R^*}c^*(\vec{\alpha})d(\vec{\alpha}) +c(\vec{d}^*)\right) = 2\left(\frac{g-1}{|G|/|H|}+1\right)+2|H|-2$$
Therefore, $(f_{\vec{\alpha}})$ corresponds to a monic curve $C$ with $\Gal(C) = H$ and, by the Riemann-Hurwitz formula, $g(C)$ is $\frac{g-1}{|G|/|H|}+1$.

\end{proof}

Therefore, for any monic curve with $\Gal(C)=H\subset G$ and $g(C)=\frac{g-1}{|G|/|H|}+1$, we can find $(f_{\vec{\alpha}})\in\F_{\vec{d}(\vec{\alpha})}$ such that $K(C) = K(\sqrt[r_1]{F_1(X)},\dots,\sqrt[r_n]{F_n(X)})$
where
$$F_j(X) = \prod_{\vec{\alpha}\in\R} f_{\vec{\alpha}}(X)^{\alpha_j}.$$

\begin{cor}\label{curvecor1}

For any $\vec{k}\in\R'$ and $E$ as in \eqref{matrix}, let
\begin{align*}
M_{\vec{k},E}(G,g) = \{C, \mbox{ monic } : & \Gal(C)=H\subset G, g(C)=\frac{g-1}{|G|/|H|}+1, \deg{F_j}\equiv k_j \Mod{r_j} \\
& \chi_{r_j}(F_j(x_i)) = \epsilon_{i,j}, i=1\dots, \ell, j=1,\dots,n\}.
\end{align*}
Then there is a natural bijection from elements of $\F_{D;\vec{k},E}$ to $M_{G,g}(\vec{k},E)$ where $D=2g+2|G|-2-c(\vec{k})$.
\end{cor}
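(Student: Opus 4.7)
The plan is to show that the natural bijection constructed in Proposition \ref{curveprop1} restricts to a bijection between $\F_{D;\vec{k},E}$ and $M_{\vec{k},E}(G,g)$, where $D=2g+2|G|-2-c(\vec{k})$. The key observation that links the two parameterizations is that the ramification index $e(\vec{v})=\lcm_j(r_j/(r_j,v_j))$, and hence $c(\vec{v})=|G|-|G|/e(\vec{v})$, depends only on the residues $v_j \bmod r_j$. Consequently, whenever $\vec{d}=(d_1,\dots,d_n)$ satisfies $d_j \equiv k_j \Mod{r_j}$ (i.e.\ \eqref{equiv1} holds), we have $c(\vec{d})=c(\vec{k})$.

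For the forward direction, I would take $(f_{\vec{\alpha}})\in\F_{D;\vec{k},E}$. By definition this means $(f_{\vec{\alpha}})\in\F_{\vec{d}(\vec{\alpha});\vec{k},E}$ for some $\vec{d}(\vec{\alpha})$ with $\sum_{\vec{\alpha}\in\R}c(\vec{\alpha})d(\vec{\alpha})=D$ and with \eqref{equiv1} satisfied. Setting $d_j=\sum_{\vec{\alpha}}\alpha_j d(\vec{\alpha})=\deg(F_j)$, the observation above gives $c(\vec{d})=c(\vec{k})$, so
$$\sum_{\vec{\alpha}\in\R} c(\vec{\alpha})d(\vec{\alpha}) + c(\vec{d}) = D + c(\vec{k}) = 2g+2|G|-2,$$
which is exactly \eqref{genform}. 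Proposition \ref{curveprop1} then attaches to $(f_{\vec{\alpha}})$ a monic curve $C$ with $\Gal(C)=H\subset G$ and $g(C)=\frac{g-1}{|G|/|H|}+1$. The additional constraints carried by $\F_{\vec{d}(\vec{\alpha});\vec{k},E}$, namely $\deg(F_j)=d_j\equiv k_j\Mod{r_j}$ and $\chi_{r_j}(F_j(x_i))=\epsilon_{i,j}$, are precisely the extra membership conditions defining $M_{\vec{k},E}(G,g)$.

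For the reverse direction, given $C\in M_{\vec{k},E}(G,g)$, Proposition \ref{curveprop1} supplies $(f_{\vec{\alpha}})\in\F_{\vec{d}(\vec{\alpha})}$ with $\vec{d}(\vec{\alpha})$ satisfying \eqref{genform}. The hypothesis $\deg(F_j)\equiv k_j\Mod{r_j}$ is exactly \eqref{equiv1}, and the hypothesis $\chi_{r_j}(F_j(x_i))=\epsilon_{i,j}$ then places $(f_{\vec{\alpha}})$ in $\F_{\vec{d}(\vec{\alpha});\vec{k},E}$. Using $c(\vec{d})=c(\vec{k})$ once more,
$$\sum_{\vec{\alpha}\in\R}c(\vec{\alpha})d(\vec{\alpha}) = 2g+2|G|-2 - c(\vec{d}) = 2g+2|G|-2 - c(\vec{k}) = D,$$
so $(f_{\vec{\alpha}})\in\F_{D;\vec{k},E}$ via \eqref{genform2}.

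Since the correspondence is exactly the one produced by Proposition \ref{curveprop1}, injectivity and surjectivity are inherited. The only real content to verify is the bookkeeping identity $c(\vec{d})=c(\vec{k})$ coming from congruence invariance of $e(\vec{v})$; once that is in hand, the corollary is essentially a restriction of domains and codomains. There is no substantive obstacle beyond this compatibility check, which is why the result is stated as a corollary rather than a separate theorem.
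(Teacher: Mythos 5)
Your proof is correct and follows the same route as the paper, which simply notes that the corollary is immediate from Proposition \ref{curveprop1} and the definition of $\F_{D;\vec{k},E}$. Your explicit verification that $c(\vec{d})=c(\vec{k})$ under the congruence \eqref{equiv1} (so that \eqref{genform2} with $D=2g+2|G|-2-c(\vec{k})$ matches \eqref{genform}) is exactly the bookkeeping the paper leaves implicit.
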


\begin{proof}
Follows immediately from Proposition \ref{curveprop1} and the definition of $\F_{D;\vec{k},E}$.
\end{proof}


\section{Inclusion-Exclusion of Abelian Groups}\label{inex}

Therefore, if we can determine the size of $\F_{D;\vec{k},E}$ corresponding to curves with any abelian Galois group and any genus, then we can hope to do an inclusion-exclusion type argument for abelian groups. Luckily, this was first done by Delsarte \cite{dels}.

Let $\mathcal{G}$ be the set of all abelian groups. Define a function
$$\mu:\mathcal{G} \to \Z$$
by
$$\mu\left(\Z/p^{a_1}\Z\times\dots\times\Z/p^{a_n}\Z\right) = \begin{cases} (-1)^n p^{\frac{n(n-1)}{2}} & a_1=\dots=a_n=1 \\ 0 & \mbox{otherwise}  \end{cases}.$$
To finish the definition if $G=G_1\times G_2$ such that $(|G_1|,|G_2|)=1$, then $\mu(G)=\mu(G_1)\mu(G_2)$. Then we have the property that
\begin{align}\label{mobinv}
\sum_{H\subset G} \mu(H) = \begin{cases} 1 & G=\{e\}  \\ 0 & \mbox{otherwise} \end{cases}.
\end{align}

\begin{rem}
This formula requires that we sum up over all subgroups of $G$ in the sense of Remark \ref{curverem}. Hence why it is important that we define $M_{G,g}$ and $M_{G,g}(\vec{k},E)$ in the way that we do.
\end{rem}

For an example of \eqref{mobinv} consider the group $\Z/Q^2\Z$, for $Q$ a prime. Then the subgroups are $\{e\}, \Z/Q\Z$ and $\Z/Q^2\Z$ and each of them appear once. Therefore,
\begin{align*}
\sum_{H\subset \Z/Q^2\Z} \mu(H) & = \mu(\{e\}) + \mu(\Z/Q\Z) + \mu(\Z/Q^2\Z)\\
& = 1 +(-1)+0 =0.
\end{align*}
Whereas if we consider the group $\Z/Q\Z\times \Z/Q\Z$, for $Q$ a prime, then the subgroups would be $\{e\}$, $\Z/Q\Z$ and $\Z/Q\Z\times\Z/Q\Z$. Obviously $\{e\}$ and $\Z/Q\Z\times\Z/Q\Z$ appear only once however, $\Z/Q\Z$ can appear many times. It is easy to see that all the subgroups of $\Z/Q\Z$ lying in $\Z/Q\Z\times \Z/Q\Z$ will be generated by $(1,a)$, $a\in\Z/Q\Z$ or $(0,1)$. That is, there are $Q+1$ different subgroups of $\Z/Q\Z$ appearing in $\Z/Q\Z\times \Z/Q\Z$. Therefore,
\begin{align*}
\sum_{H\subset \Z/Q\Z\times\Z/Q\Z} \mu(H) & = \mu(\{e\}) + (Q+1)\mu(\Z/Q\Z) + \mu(\Z/Q\Z\times\Z/Q\Z)\\
& = 1 +(Q+1)(-1)+Q =0.
\end{align*}

This allows us to perform M$\ddot{o}$bius inversion on $M(G,g)$.

\begin{lem}\label{inexlem}
For any abelian group, $G$, and genus, $g$,
\begin{align*}
|\Hh^*_{G,g}| = \sum_{H\subset G} \mu(G/H)|M(H, \frac{g-1}{|G|/|H|}+1)|
\end{align*}

Likewise, for any $\vec{k}\in\R$ and $E$ as in \eqref{matrix}
\begin{align*}
|\Hh^*_{G,g}(\vec{k},E)| = \sum_{H\subset G} |M_{\vec{k},E}(H,\frac{g-1}{|G|/|H|}+1)|
\end{align*}
\end{lem}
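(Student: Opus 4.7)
The proof is by Möbius inversion on the subgroup lattice of $G$. Write $g(H) := \frac{g-1}{|G|/|H|}+1$. My first step is to expand $|M(H, g(H))|$ as a sum over subgroups $H' \subset H$. Applying Proposition~\ref{curveprop1} with the ambient group taken to be $H$, the set $M(H, g(H))$ is in bijection with tuples $(f_{\vec{\alpha}})$ indexed by nonzero elements of $H$; partitioning these tuples by the subgroup $H' = \langle \R^* \rangle \subset H$ generated by the support, and identifying each $H'$-block with $\Hh^*_{H', g(H')}$ via the intrinsic tuple description of $H'$, gives
\begin{align*}
|M(H, g(H))| = \sum_{H' \subset H} |\Hh^*_{H', g(H')}|.
\end{align*}
The nested identity $\frac{g(H)-1}{|H|/|H'|} + 1 = g(H')$ ensures the genera align.

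Substituting this into the right-hand side of the claimed formula and interchanging the sums,
\begin{align*}
\sum_{H \subset G} \mu(G/H) |M(H, g(H))| = \sum_{H' \subset G} |\Hh^*_{H', g(H')}| \sum_{H:\, H' \subset H \subset G} \mu(G/H).
\end{align*}
Setting $L := G/H'$, the map $H \mapsto K := H/H'$ is a bijection between subgroups $H$ with $H' \subset H \subset G$ and subgroups $K \subset L$, under which $G/H \cong L/K$. The inner sum thus becomes $\sum_{K \subset L} \mu(L/K)$, and the claim reduces to the dual Möbius identity
\begin{align*}
\sum_{K \subset L} \mu(L/K) = \delta_{L, \{e\}}
\end{align*}
for every finite abelian group $L$.

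I expect this dual identity to be the main obstacle. My plan is to prove it via Pontryagin duality: for finite abelian $L$, the annihilator map $K \mapsto K^\perp$ is a bijection between subgroups of $L$ and subgroups of $\widehat{L}$ satisfying $L/K \cong \widehat{K^\perp}$. Since $\widehat{L} \cong L$ non-canonically and $\mu$ depends only on the isomorphism class of its argument, the sum reindexes as $\sum_{K' \subset L} \mu(K')$, which equals $\delta_{L, \{e\}}$ by \eqref{mobinv}. This annihilates every term in the outer sum except $H' = G$, leaving $|\Hh^*_{G, g}|$ as required. The second formula follows from the identical argument, replacing $M$ by $M_{\vec{k}, E}$, $\Hh^*_{H', g(H')}$ by $\Hh^*_{H', g(H')}(\vec{k}, E)$, and Proposition~\ref{curveprop1} by Corollary~\ref{curvecor1}; the $(\vec{k}, E)$-conditions are defined in terms of the $F_j$'s and are preserved by the tuple identifications used in the first step.
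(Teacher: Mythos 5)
Your proof is correct and follows essentially the same route as the paper: expand $|M(H,\cdot)|$ as a sum over subgroups via Proposition~\ref{curveprop1}, interchange the two sums, and annihilate the inner sum by M\"obius inversion. The only difference is that you make explicit, via the annihilator bijection, the identity $\sum_{K\subset L}\mu(L/K)=\delta_{L,\{e\}}$, which the paper uses silently when it rewrites $\sum_{H'\subset H\subset G}\mu(G/H)$ as $\sum_{H''\subset G/H'}\mu(H'')$.
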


\begin{proof}

Straight from the definition we get
$$|M(G,g)| = \sum_{H\subset G}\left|\Hh^*_{H, \frac{g-1}{|G|/|H|}+1}\right|.$$
Therefore,
\begin{align*}
\sum_{H\subset G} \mu(G/H) \left|M\left(H,\frac{g-1}{|G|/|H|}+1\right)\right| & = \sum_{H\subset G} \mu(G/H) \sum_{H'\subset H} \left|\Hh^*_{H',\frac{g-1}{|G|/|H'|}+1}\right|\\
& = \sum_{H'\subset G } \left|\Hh^*_{H',\frac{g-1}{|G|/|H'|}+1}\right| \sum_{H'\subset H \subset G} \mu(G/H) \\
& = \sum_{H'\subset G } \left|\Hh^*_{H',\frac{g-1}{|G|/|H'|}+1}\right| \sum_{H''\subset G/H'} \mu(H'') \\
& =   |\Hh^*_{G,g}|.
\end{align*}

The proof of the likewise is analogous.
\end{proof}


\section{Generating Series}\label{gensersec}

It remains to determine the size of $\F_{D;\vec{k},E}$ as $D\to\infty$. In order to do this we will develop a generating series for this set. But first, we need indicator functions for the relations
$$d_j \equiv k_j \Mod{r_j}, j=1,\dots,n$$
$$\chi_{r_j}(F(x_i))=\epsilon_{i,j}, i=1,\dots,\ell, j=1,\dots,n.$$

That is, if we let $\xi_{r_j}= e^{\frac{2\pi i}{r_j}}$, a primitive $r_j^{th}$ root of unity, then
\begin{align}\label{indicatfact1}
\frac{1}{r_1\cdots r_n}\prod_{j=1}^n \sum_{t_j=0}^{r_j-1} \xi_{r_j}^{t_j(\sum \alpha_j\deg(f_{\vec{\alpha}}) - k_j)} = \begin{cases} 1 & \sum_{\vec{\alpha}\in\R} \alpha_j\deg(f_{\vec{\alpha}}) \equiv k_j \Mod{r_j} \\ 0 & \mbox{otherwise}\end{cases}.
\end{align}
Further, if we denote $h(X)= \prod_{i=1}^{\ell} (X-x_i)$, then as long as $(F_j,h)=1$ for $j=1,\dots,n$, we get
\begin{align}\label{indicatfact2}
\left(\frac{1}{r_1\cdots r_n}\right)^{\ell}\prod_{i=1}^{\ell}\prod_{j=1}^n \sum_{\nu_{i,j}=0}^{r_j-1} (\epsilon_{i,j}^{-1}\chi_{r_j}(F_j(x_i))^{\nu_{i,j}} = \begin{cases} 1 & \chi_{r_j}(F_j(x_i))=\epsilon_{i,j}, i=1,\dots,\ell, j=1,\dots,n \\ 0 & \mbox{otherwise} \end{cases}.
\end{align}
\begin{rem}
The sum in the exponent in \eqref{indicatfact1} is a sum over all $\vec{\alpha}\in\R$.
\end{rem}

For ease of notation, for every set of polynomials $(f_{\vec{\alpha}})$, let  $I_{\vec{k},E}((f_{\vec{\alpha}}))$ be the indicator function defined as
\begin{align}\label{indicator1}
I_{\vec{k},E}((f_{\vec{\alpha}})) = \left(\frac{1}{r_1\cdots r_n}\right)^{\ell+1}\left(\prod_{j=1}^n\sum_{t_j=0}^{r_j-1} \xi_{r_j}^{t_j(\sum \alpha_j\deg(f_{\vec{\alpha}}) - k_j)}\right) \left(\prod_{i=1}^{\ell}\prod_{j=1}^n \sum_{\nu_{i,j}=0}^{r_j-1} (\epsilon_{i,j}^{-1}\chi_{r_j}(F_j(x_i))^{\nu_{i,j}}\right).
\end{align}

Now, define the multi-variable complex function
\begin{align}\label{genfunc1}
\mathcal{G}_{\vec{k},E}((s_{\vec{\alpha}})) = \sum_{(f_{\vec{\alpha}})} \frac{\mu^2(h\prod_{\vec{\alpha}\in\R} f_{\vec{\alpha}}) I_{\vec{k},E}((f_{\vec{\alpha}}))}{ \prod_{\vec{\alpha}\in\R} |f_{\vec{\alpha}}|^{c(\vec{\alpha})s_{\vec{\alpha}}} }.
\end{align}

\begin{rem}

The sum is over \textit{all} $r_1\cdots r_n -1$-tuples of monic polynomials $(f_{\vec{\alpha}})_{\vec{\alpha}\in\R}$. However, the factor $\mu^2(h\prod_{\vec{\alpha}\in\R} f_{\vec{\alpha}})$ means that it is zero whenever the set of polynomials $(f_{\vec{\alpha}})$ are not square-free and pairwise coprime as well as coprime to $h$ (and thus non-zero at any of the $x_i$). Moreover, as usual, we let $|f_{\vec{\alpha}}|= q^{\deg(f_{\vec{\alpha}})}$.

\end{rem}

Now, if we let $z_{\vec{\alpha}}=q^{-s_{\vec{\alpha}}}$ and define $F_{\vec{k},E}((z_{\vec{\alpha}})) = \mathcal{G}_{\vec{k},E}((q^{-s_{\vec{\alpha}}}))$, then
\begin{align*}
F_{\vec{k},E}((z_{\vec{\alpha}})) & = \sum_{(f_{\vec{\alpha}})} \mu^2(h\prod_{\vec{\alpha}\in\R} f_{\vec{\alpha}}) I_{\vec{k},E}((f_{\vec{\alpha}})) \prod_{\vec{\alpha}\in\R} z_{\vec{\alpha}}^{c(\vec{\alpha})\deg(f_{\vec{\alpha}})} \\
& = \sum_{\substack{ d(\vec{\alpha})=0 \\ \vec{\alpha}\in\R }}^{\infty} |\F_{\vec{d}(\vec{\alpha});\vec{k},E}| \prod_{\vec{\alpha}\in\R} z_{\vec{\alpha}}^{c(\vec{\alpha})d(\vec{\alpha})}.
\end{align*}

With some abuse of notation, if we let $F_{\vec{k},E}(z)$ be the function that sets all the $z_{\vec{\alpha}}=z$ to be the same in $F_{\vec{k},E}((z_{\vec{\alpha}}))$, then we get
\begin{align}
F_{\vec{k},E}(z) &= \sum_{\substack{ d(\vec{\alpha})=0 \\ \vec{\alpha}\in\R }}^{\infty} |\F_{\vec{d}(\vec{\alpha});\vec{k},E}| z^{\sum_{\vec{\alpha}\in\R}c(\vec{\alpha})d(\vec{\alpha})} \label{genfunc2}  \\
& =  \sum_{D=0}^{\infty} |\F_{D;\vec{k},E}|z^D. \nonumber
\end{align}

Ideally, we would like to write $F_{\vec{k},E}(z)$ as an Euler product. However, this is not possible. We can, though, write it as a sum of functions that can be written as a Euler product. But first we need some notation.Let
$$\M := \left\{ \nu = \begin{pmatrix} \nu_{1,1} & \dots & \nu_{1,n} \\ \vdots & & \vdots \\ \nu_{\ell,1} & \dots & \nu_{\ell,n} \end{pmatrix} \in M_{\ell,n} : \nu_{i,j} \in \Z/r_j\Z \right\}.$$
We can define an action on $\R'$ and $E$ by $\M$ by
\begin{align}
\nu\vec{\alpha} & := \begin{pmatrix} \sum_{j=1}^n \frac{r_n}{r_j}\nu_{1,j}\alpha_j \\ \vdots \\ \sum_{j=1}^n \frac{r_n}{r_j} \nu_{\ell,j} \alpha_j \end{pmatrix} \in (\Z/r_n\Z)^{\ell}\\
E^{\nu} & :=  \prod_{i=1}^{\ell} \prod_{j=1}^n \epsilon_{i,j}^{\nu_{i,j}} \in\mu_{r_n}
\end{align}
Moreover, for any $\vec{\alpha},\vec{\beta}\in\R'$ define
\begin{align}
\vec{\alpha}\cdot\vec{\beta} = \sum_{j=1}^n \frac{r_n}{r_j}\alpha_j\beta_j \in \Z/r_n\Z.
\end{align}

With this notation, we can rewrite \eqref{indicatfact1} as
$$\frac{1}{r_1\cdots r_n} \prod_{j=1}^n \sum_{t_j=0}^{r_j-1} \xi_{r_j}^{t_j(\sum \alpha_j\deg(f_{\vec{\alpha}}) - k_j)} = \frac{1}{r_1\cdots r_n} \sum_{\vec{t}\in\R'} \prod_{j=1}^n \xi_{r_j}^{t_j(\sum \alpha_j\deg(f_{\vec{\alpha}}) - k_j)}$$
$$= \frac{1}{r_1\cdots r_n} \sum_{\vec{t}\in\R'} \xi_{r_n}^{-\vec{t}\cdot\vec{k}} \prod_{\vec{\alpha}\in\R} \xi_{r_n}^{\vec{t}\cdot\vec{\alpha}\deg(f_{\vec{\alpha}})}. $$

Recall $h(X) = \prod_{i=1}^{\ell} (X-x_i)$. For every $\nu\in\M$ and $\vec{\alpha}\in\R$, define
\begin{align}\label{character}
\chi^{\nu\vec{\alpha}}_{r_n}(F(X)) = \begin{cases}\prod_{i=1}^{\ell} \chi_{r_n}^{(\nu\vec{\alpha})_i}(F(x_i)) & (F,h)=1 \\ 0 & \mbox{otherwise}\end{cases}.
\end{align}
Then, $\chi^{\nu\vec{\alpha}}_{r_n}$ if a multiplicative character on $\Ff_q[X]$ modulo $h(X)$. Moreover, it will be trivial if and only if $\nu\vec{\alpha}=\vec{0}$. Hence, we can rewrite \eqref{indicatfact2} as
$$\left(\frac{1}{r_1\cdots r_n}\right)^{\ell} \prod_{i=1}^{\ell}\prod_{j=1}^n \sum_{\nu_{i,j}=0}^{r_j-1} (\epsilon_{i,j}^{-1}\chi_{r_j}(F_j(x_i))^{\nu_{i,j}} = \left(\frac{1}{r_1\cdots r_n}\right)^{\ell} \sum_{\nu\in\M} \prod_{i=1}^{\ell}\prod_{j=1}^n (\epsilon_{i,j}^{-1}\chi_{r_j}(F_j(x_i))^{\nu_{i,j}} $$
$$ = \left(\frac{1}{r_1\cdots r_n}\right)^{\ell} \sum_{\nu\in\M} E^{-\nu} \prod_{\vec{\alpha}\in\R} \prod_{i=1}^{\ell}\prod_{j=1}^n \chi^{\nu_{i,j}}_{r_j}(f_{\vec{\alpha}}^{\alpha_j}(x_i)) = \left(\frac{1}{r_1\cdots r_n}\right)^{\ell} \sum_{\nu\in\M} E^{-\nu} \prod_{\vec{\alpha}\in\R} \chi^{\nu\vec{\alpha}}_{r_n}(f_{\vec{\alpha}}(X)).$$

Therefore, we can rewrite the indicator function in \eqref{indicator1} as
\begin{align}\label{indicator2}
I_{\vec{k},E}((f_{\vec{\alpha}})) = \left(\frac{1}{r_1\cdots r_n}\right)^{\ell+1}\sum_{\vec{t}\in\R'} \sum_{\nu\in\M} E^{-\nu} \xi_{r_n}^{-\vec{t}\cdot\vec{k}} \prod_{\vec{\alpha}\in\R} \xi_{r_n}^{\vec{t}\cdot\vec{\alpha}\deg(f_{\vec{\alpha}})} \chi^{\nu\vec{\alpha}}_{r_n}(f_{\vec{\alpha}}(X)).
\end{align}

We can rewrite $F_{\vec{k},E}(z)$ using this new notation.

\begin{align*}
F_{\vec{k},E}(z) & = \sum_{(f_{\vec{\alpha}})} \mu^2(h\prod_{\vec{\alpha}\in\R} f_{\vec{\alpha}}) I_{\vec{k},E}((f_{\vec{\alpha}})) z^{\sum c(\vec{\alpha})\deg(f_{\vec{\alpha}})}\\
& = \left(\frac{1}{r_1\cdots r_n}\right)^{\ell+1} \sum_{(f_{\vec{\alpha}})} \mu^2(h\prod_{\vec{\alpha}\in\R} f_{\vec{\alpha}})\sum_{\vec{t}\in\R'} \sum_{\nu\in\M} E^{-\nu} \xi_{r_n}^{-\vec{t}\cdot\vec{k}} \prod_{\vec{\alpha}\in\R} \left( \chi_{r_n}^{\nu\vec{\alpha}}(f_{\vec{\alpha}}) (\xi_{r_n}^{\vec{t}\cdot\vec{\alpha}} z^{ c(\vec{\alpha})})^{\deg(f_{\vec{\alpha}})}\right) \\
& = \left(\frac{1}{r_1\cdots r_n}\right)^{\ell+1} \sum_{\vec{t}\in\R'} \sum_{\nu\in\M} E^{-\nu} \xi_{r_n}^{-\vec{t}\cdot\vec{k}}  A_{\vec{t},\nu}(z)
\end{align*}
where
\begin{align*}
A_{\vec{t},\nu}(z) & := \sum_{(f_{\vec{\alpha}})} \mu^2(h\prod_{\vec{\alpha}} f_{\vec{\alpha}}) \prod_{\vec{\alpha}\in\R}\left( \chi_{r_n}^{\nu\vec{\alpha}}(f_{\vec{\alpha}}) (\xi_{r_n}^{\vec{t}\cdot\vec{\alpha}} z^{ c(\vec{\alpha})})^{\deg(f_{\vec{\alpha}})}\right).
\end{align*}

\begin{defn}
We call a function $G : \Ff_q[X]^n\to\mathbb{C}$ an \textbf{$n$-dimensional multiplicative} function if
$$G(f_1,\dots,f_n) = \prod_P G(P^{v_P(f_1)},\dots,P^{v_P(f_n)})$$
where the product is over all prime polynomial $P$ dividing $f_1 \cdots f_n$.
\end{defn}

Therefore, if $G$ is an $n$-dimensional multiplicative function, then
$$\sum_{f_1,\dots,f_n} G(f_1,\dots,f_n) = \prod_{P} \left(1 + \sum_{(a_1,\dots,a_n)\not=(0,\dots,0)}G(P^{a_1},\dots,P^{a_n})\right).$$
where the sum is over all monic polynomials in $\Ff_q[X]$ and the product is over all monic prime polynomials.

Now,
$$G((f_{\vec{\alpha}})) =  \mu^2(h\prod_{\vec{\alpha}} f_{\vec{\alpha}}) \prod_{\vec{\alpha}\in\R}\left( \chi_{r_n}^{\nu\vec{\alpha}}(f_{\vec{\alpha}}) (\xi_{r_n}^{\vec{t}\cdot\vec{\alpha}} z^{ c(\vec{\alpha})})^{\deg(f_{\vec{\alpha}})}\right)$$
is an $|R|$-dimensional multiplicative function. Moreover, if $P$ is a prime polynomial coprime to $h$, then
$$G((P^{a_{\vec{\alpha}}})) = \begin{cases}\chi^{\nu\vec{\alpha_0}}_{r_n}(P)(\xi_{r_n}^{\vec{t}\cdot\vec{\alpha_0}} z^{c(\vec{\alpha_0})})^{\deg(P)} & a_{\vec{\alpha_0}}=1 \mbox{ for some } \vec{\alpha_0}, a_{\vec{\beta}}=0 \mbox{ for all } \vec{\beta}\not=\vec{\alpha_0} \\ 0 & \mbox{otherwise}   \end{cases}$$

Therefore,
\begin{align*}
A_{\vec{t},\nu}(z) & = \sum_{\substack{f_{\vec{\alpha}} \\ \vec{\alpha}\in\R}} \mu^2(h\prod_{\vec{\alpha}} f_{\vec{\alpha}}) \prod_{\vec{\alpha}\in\R}\left( \chi_{r_n}^{\nu\vec{\alpha}}(f_{\vec{\alpha}}) (\xi_{r_n}^{\vec{t}\cdot\vec{\alpha}} z^{ c(\vec{\alpha})})^{\deg(f_{\vec{\alpha}})}\right) \\
&= \prod_{\substack{P \\ (P,h)=1}} \left(1 + \sum_{\vec{\alpha}\in\R} \chi^{\nu\vec{\alpha}}_{r_n}(P)(\xi_{r_n}^{\vec{t}\cdot\vec{\alpha}} z^{c(\vec{\alpha})})^{\deg(P)} \right).
\end{align*}

Now, if we let $c_1<c_2<\dots<c_{\eta}$ be the unique values of the $c(\vec{\alpha})$, then we see that if $|z|< q^{-1/c_1}$, then $A_{\vec{t},\nu}(z)$ absolutely converges for all $\vec{t},\nu$ and, hence, so does $F_{\vec{k},E}(z)$. Therefore, we can express $|\F_{D;\vec{k},E}|$ as a contour integral of $F_{\vec{k},E}(z)$.

\begin{prop}\label{firstresprop}

If $c_1 = \min(c(\vec{\alpha}))$ and $0<\delta_1<q^{-1/c_1}$ then let $C_{\delta_1} = \{z\in \mathbb{C} : |z|=\delta_1\}$, oriented counterclockwise. Then
\begin{align}\label{firstres}
\frac{1}{2\pi i}\oint_{C_{\delta_1}} \frac{F_{\vec{k},E}(z)}{z^{D+1}} dz = |\F_{D;\vec{k},E}|.
\end{align}
\end{prop}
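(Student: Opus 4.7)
The plan is to recognize the stated identity as the standard Cauchy coefficient extraction formula applied to the generating function $F_{\vec{k},E}(z)$, once we have justified absolute convergence on the contour $C_{\delta_1}$.

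First I would recall the series expansion
\[
F_{\vec{k},E}(z) = \sum_{D=0}^{\infty} |\F_{D;\vec{k},E}|\, z^D
\]
already derived in Section~\ref{gensersec} from the definitions of $F_{\vec{k},E}((z_{\vec{\alpha}}))$ and $\F_{D;\vec{k},E}$. The content of the proposition is then simply that the $D$-th coefficient of this power series can be recovered by a contour integral against $z^{-(D+1)}$, provided the contour lies inside the disk of convergence.

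Next I would establish that the disk of convergence contains $\{|z|<q^{-1/c_1}\}$. The decomposition
\[
F_{\vec{k},E}(z) = \left(\frac{1}{r_1\cdots r_n}\right)^{\ell+1}\sum_{\vec{t}\in\R'}\sum_{\nu\in\M} E^{-\nu}\xi_{r_n}^{-\vec{t}\cdot\vec{k}}\, A_{\vec{t},\nu}(z)
\]
reduces the question to the absolute convergence of each Euler product
\[
A_{\vec{t},\nu}(z)=\prod_{(P,h)=1}\Bigl(1+\sum_{\vec{\alpha}\in\R}\chi_{r_n}^{\nu\vec{\alpha}}(P)\bigl(\xi_{r_n}^{\vec{t}\cdot\vec{\alpha}}z^{c(\vec{\alpha})}\bigr)^{\deg(P)}\Bigr).
\]
Bounding $|\chi_{r_n}^{\nu\vec{\alpha}}(P)|\le 1$ and using the prime polynomial count $|\{P:\deg(P)=m\}|\le q^m/m$, each local factor is at most $1+O(|z|^{c_1\deg(P)}|P|)$, so the Euler product converges absolutely whenever $|z|^{c_1}q<1$, i.e.\ $|z|<q^{-1/c_1}$. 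This is exactly the range in which $\delta_1$ lives, so $F_{\vec{k},E}(z)$ is holomorphic on an open neighbourhood of $C_{\delta_1}$ and the power series converges uniformly on $C_{\delta_1}$.

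Having uniform convergence on the contour, I would finish by integrating term-by-term:
\[
\frac{1}{2\pi i}\oint_{C_{\delta_1}}\frac{F_{\vec{k},E}(z)}{z^{D+1}}\,dz
=\sum_{D'=0}^{\infty}|\F_{D';\vec{k},E}|\cdot\frac{1}{2\pi i}\oint_{C_{\delta_1}}z^{D'-D-1}\,dz
=|\F_{D;\vec{k},E}|,
\]
since $\frac{1}{2\pi i}\oint_{C_{\delta_1}}z^{m}\,dz=\delta_{m,-1}$. The only step that requires any real care is verifying that the Euler product for $A_{\vec{t},\nu}(z)$ converges throughout $|z|<q^{-1/c_1}$, since the rest is a routine application of Cauchy's formula; this bound is essentially already observed in the paragraph preceding the proposition, so the proof is short.
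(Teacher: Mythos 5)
Your proof is correct and follows essentially the same route as the paper: the paper likewise appeals to the expansion $F_{\vec{k},E}(z)=\sum_{D\ge 0}|\F_{D;\vec{k},E}|z^D$ and to the absolute convergence of the Euler products $A_{\vec{t},\nu}(z)$ for $|z|<q^{-1/c_1}$ (noted just before the proposition), then reads off the residue at $z=0$. Your write-up merely makes explicit the prime-counting bound behind the convergence and the term-by-term integration, which the paper leaves implicit.
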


\begin{proof}

By \eqref{genfunc2}, we have
$$F_{\vec{k},E}(z) = \sum_{D=0}^{\infty} |\F_{D;\vec{k},E}|z^D.$$
By our discussion above, $\frac{F_{\vec{k},E}(z)}{z^{D+1}}$ has only one pole at $0$ in the region contained in $C_{\delta_1}$ and it's residue is $|\F_{D;\vec{k},E}|$.

\end{proof}


\section{Analytic Continuation of $A_{\vec{t},\nu}(z)$}

In this section, we will calculate an analytic continuation for $A_{\vec{t},\nu}(z)$ for all $\vec{t},\nu$. Then in the next section, we will use this analytic continuation to analyze the poles of $A_{\vec{t},\nu}(z)$.

Recall $h(X) = \prod_{i=1}^{\ell}(X-x_i)$ and define $\R_{\nu} = \{\vec{\alpha}\in\R: \nu\vec{\alpha}=\vec{0}\}$, then the character $\chi_{r_n}^{\nu\vec{\alpha}}$ (as defined in \eqref{character}) will be trivial if and only if $\vec{\alpha}\in\R_{\nu}$. Therefore,
\allowdisplaybreaks
\begin{align*}
A_{\vec{t},\nu}(z) = & \prod_{\substack{P \\ (P,h)=1}} \left(1 + \sum_{\vec{\alpha}\in\R} \chi^{\nu\vec{\alpha}}_{r_n}(P) (\xi_{r_n}^{\vec{t}\cdot\vec{\alpha}} z^{c(\vec{\alpha})})^{\deg(P)} \right) \\
= & \prod_{\substack{P \\ (P,h)=1}} \left( 1 + \sum_{\vec{\alpha}\in\R_{\nu}}(\xi_{r_n}^{\vec{t}\cdot\vec{\alpha}} z^{c(\vec{\alpha})})^{\deg(P)} + \sum_{\vec{\alpha}\not\in\R_{\nu}} \chi_{r_n}^{\nu\vec{\alpha}}(P)(\xi_{r_n}^{\vec{t}\vec{\alpha}}z^{c(\vec{\alpha})})^{\deg(P)}\right)\\
= & \prod_{P} \left( 1 + \sum_{\vec{\alpha}\in\R_{\nu}}(\xi_{r_n}^{\vec{t}\cdot\vec{\alpha}} z^{c(\vec{\alpha})})^{\deg(P)} + \sum_{\vec{\alpha}\not\in\R_{\nu}} \chi_{r_n}^{\nu\vec{\alpha}}(P) (\xi_{r_n}^{\vec{t}\vec{\alpha}}z^{c(\vec{\alpha})})^{\deg(P)}\right) \times\\
& \prod_{P|h}\left(1 + \sum_{\vec{\alpha}\in\R_{\nu}}(\xi_{r_n}^{\vec{t}\cdot\vec{\alpha}} z^{c(\vec{\alpha})})^{\deg(P)}\right)^{-1}\\
= & \prod_{\vec{\alpha}\in\R_{\nu}}\prod_{P}\left(1 + (\xi_{r_n}^{\vec{t}\cdot\vec{\alpha}} z^{c(\vec{\alpha})})^{\deg(P)}\right) H_{\vec{t},\nu}(z)\\
= & \prod_{\vec{\alpha}\in\R_{\nu}} \frac{Z_K(\xi_{r_n}^{\vec{t}\cdot\vec{\alpha}}z^{c(\vec{\alpha})})} {Z_K(\xi_{r_n}^{2\vec{t}\cdot\vec{\alpha}}z^{2c(\vec{\alpha})})} H_{\vec{t},\nu}(z)
\end{align*}
where
$$Z_K(z) = \prod_P \left(1-z^{\deg(P)}\right)^{-1} = (1-qz)^{-1}$$
is the zeta-function of $K$ in the $z$-variable and
$$H_{\vec{t},\nu}(z) = \prod_{P} \left(\frac{ 1 + \sum_{\vec{\alpha}\in\R_{\nu}}(\xi_{r_n}^{\vec{t}\cdot\vec{\alpha}} z^{c(\vec{\alpha})})^{\deg(P)} + \sum_{\vec{\alpha}\not\in\R_{\nu}} \chi_{r_n}^{\nu\vec{\alpha}}(P) (\xi_{r_n}^{\vec{t}\cdot\vec{\alpha}}z^{c(\vec{\alpha})})^{\deg(P)}}{\prod_{\vec{\alpha}\in\R_{\nu}}\left(1 + (\xi_{r_n}^{\vec{t}\cdot\vec{\alpha}} z^{c(\vec{\alpha})})^{\deg(P)}\right)  }\right) \times$$
$$\prod_{P|h}\left(1 + \sum_{\vec{\alpha}\in\R_{\nu}}(\xi_{r_n}^{\vec{t}\cdot\vec{\alpha}} z^{c(\vec{\alpha})})^{\deg(P)}\right)^{-1}.$$

Now, for all $\vec{\alpha}\in\R$,
$$\frac{Z_K(\xi_{r_n}^{\vec{t}\cdot\vec{\alpha}}z^{c(\vec{\alpha})})}{Z_K(\xi_{r_n}^{2\vec{t}\cdot\vec{\alpha}} z^{2c(\vec{\alpha})})}$$
is a meromorphic function with simple poles when $z^{c(\vec{\alpha})}=(q\xi_{r_n}^{\vec{t}\cdot\vec{\alpha}}) ^{-1/c(\vec{\alpha})}$. So it remains to determine where $H_{\vec{t},\nu}(z)$ converges.

\begin{lem}\label{analyticlem}

$H_{\vec{t},\nu}(z)$ absolutely converges for all $|z|<q^{-1/2c_1}$.

\end{lem}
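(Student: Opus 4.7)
\textbf{Proof plan for Lemma \ref{analyticlem}.} The plan is to identify the Euler factor of $H_{\vec{t},\nu}(z)$ at each prime $P\nmid h$, isolate the linear-in-$u_{\vec{\alpha}}^{\deg P}$ terms, and extract the Dirichlet $L$-functions corresponding to the non-trivial characters $\chi_{r_n}^{\nu\vec{\alpha}}$ (for $\vec{\alpha}\notin\R_\nu$). Once those $L$-factors are pulled out (each being a polynomial in $z$, hence entire), the residual Euler product will have local factors of the form $1+O(|z|^{2c_1\deg P})$, which is exactly what is needed to push absolute convergence out to $|z|<q^{-1/2c_1}$.

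First I would write $u_{\vec{\alpha}}:=\xi_{r_n}^{\vec{t}\cdot\vec{\alpha}}z^{c(\vec{\alpha})}$ and, for $P\nmid h$, denote the local factor of the principal product in the definition of $H_{\vec{t},\nu}(z)$ by
$$L_P=\frac{1+S_1(P)+S_2(P)}{\prod_{\vec{\alpha}\in\R_\nu}\bigl(1+u_{\vec{\alpha}}^{\deg P}\bigr)},\qquad S_1(P)=\sum_{\vec{\alpha}\in\R_\nu}u_{\vec{\alpha}}^{\deg P},\ \ S_2(P)=\sum_{\vec{\alpha}\notin\R_\nu}\chi_{r_n}^{\nu\vec{\alpha}}(P)u_{\vec{\alpha}}^{\deg P}.$$
Expanding the denominator as $1+S_1(P)+O(|z|^{2c_1\deg P})$ and dividing, a direct Taylor expansion yields
$$L_P\;=\;1\;+\;S_2(P)\;+\;O\!\bigl(|z|^{2c_1\deg P}\bigr),$$
so the linear contributions coming from $\vec{\alpha}\in\R_\nu$ cancel between numerator and denominator at first order.

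Next I would use that for each $\vec{\alpha}\notin\R_\nu$ the character $\chi_{r_n}^{\nu\vec{\alpha}}$ is non-trivial modulo $h$ (by the very definition of $\R_\nu$), so the Dirichlet $L$-function $L(u_{\vec{\alpha}},\chi_{r_n}^{\nu\vec{\alpha}})=\prod_{P}\bigl(1-\chi_{r_n}^{\nu\vec{\alpha}}(P)u_{\vec{\alpha}}^{\deg P}\bigr)^{-1}$ is a polynomial in $u_{\vec{\alpha}}$ of degree at most $\deg h-1$, hence an entire function of $z$. Writing
$$H_{\vec{t},\nu}(z)\;=\;\Bigl(\prod_{\vec{\alpha}\notin\R_\nu}L\bigl(u_{\vec{\alpha}},\chi_{r_n}^{\nu\vec{\alpha}}\bigr)\Bigr)\cdot\widetilde H_{\vec{t},\nu}(z),$$
the local factor of $\widetilde H_{\vec{t},\nu}$ at $P\nmid h$ becomes
$$L_P\cdot\prod_{\vec{\alpha}\notin\R_\nu}\bigl(1-\chi_{r_n}^{\nu\vec{\alpha}}(P)u_{\vec{\alpha}}^{\deg P}\bigr)\;=\;\bigl(1+S_2(P)+O(|z|^{2c_1\deg P})\bigr)\bigl(1-S_2(P)+O(|z|^{2c_1\deg P})\bigr)\;=\;1+O\!\bigl(|z|^{2c_1\deg P}\bigr).$$
The residual Euler product $\widetilde H_{\vec{t},\nu}(z)$ is then majorised by $\prod_P(1+C|z|^{2c_1\deg P})$, which converges absolutely precisely when $\sum_{d\geq 1}\pi_q(d)|z|^{2c_1 d}<\infty$; since $\pi_q(d)\sim q^d/d$, this is exactly $q|z|^{2c_1}<1$, i.e. $|z|<q^{-1/2c_1}$. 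The remaining factor $\prod_{P\mid h}\bigl(1+\sum_{\vec{\alpha}\in\R_\nu}u_{\vec{\alpha}}^{\deg P}\bigr)^{-1}$ is a \emph{finite} product of rational functions, and thus is harmless for the absolute-convergence statement.

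The main obstacle is the first-order cancellation: one has to verify that the trivial-character ($\vec{\alpha}\in\R_\nu$) part of the numerator really matches the denominator up through the $O(|z|^{2c_1\deg P})$ remainder, and that the non-trivial character part is \emph{precisely} cancelled by the $L$-functions we extract. Both hinge on the fact that $\R_\nu$ is exactly the set of $\vec{\alpha}\in\R$ for which $\chi_{r_n}^{\nu\vec{\alpha}}$ is trivial on $(\Ff_q[X]/h)^\times$; any mismatch here would leave an uncancelled character sum that diverges beyond $|z|<q^{-1/c_1}$. Once that bookkeeping is in place, the bound $|z|<q^{-1/2c_1}$ is forced by the shape of the second-order remainder together with the prime counting estimate $\pi_q(d)\asymp q^d/d$.
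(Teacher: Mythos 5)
Your proof is correct and takes essentially the same route as the paper: isolate the finite product over $P\mid h$, pull out of the main Euler product the factors attached to the non-trivial characters $\chi_{r_n}^{\nu\vec{\alpha}}$ for $\vec{\alpha}\notin\R_\nu$ (the paper extracts $\prod_P\bigl(1+\chi_{r_n}^{\nu\vec{\alpha}}(P)u_{\vec{\alpha}}^{\deg P}\bigr)$ directly rather than the polynomial $L(u_{\vec{\alpha}},\chi_{r_n}^{\nu\vec{\alpha}})$, a cosmetic difference), and check that the residual local factors are $1+O\bigl(|z|^{2c_1\deg P}\bigr)$, which yields absolute convergence for $|z|<q^{-1/2c_1}$. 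The first-order cancellation you flag as the main obstacle is exactly the computation of $h_P(z)$ in the paper's proof, so nothing is missing.
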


\begin{proof}

Since
$$\prod_{P|h}\left(1 + \sum_{\vec{\alpha}\in\R_{\nu}}(\xi_{r_n}^{\vec{t}\cdot\vec{\alpha}} z^{c(\vec{\alpha})})^{\deg(P)}\right)^{-1}$$
is a finite product, it will always converge and thus we need only consider the factor consisting of the infinite product.

$$\prod_{P} \left(\frac{ 1 + \sum_{\vec{\alpha}\in\R_{\nu}}(\xi_{r_n}^{\vec{t}\cdot\vec{\alpha}} z^{c(\vec{\alpha})})^{\deg(P)} + \sum_{\vec{\alpha}\not\in\R_{\nu}} \chi_{r_n}^{\nu\vec{\alpha}}(P) (\xi_{r_n}^{\vec{t}\cdot\vec{\alpha}}z^{c(\vec{\alpha})})^{\deg(P)}}{\prod_{\vec{\alpha}\in\R_{\nu}}\left(1 + (\xi_{r_n}^{\vec{t}\cdot\vec{\alpha}} z^{c(\vec{\alpha})})^{\deg(P)}\right)  }\right)$$
$$ = \prod_{\vec{\alpha}\not\in\R_\nu} \prod_P \left(1+\chi_{r_n}^{\nu\vec{\alpha}}(P) (\xi_{r_n}^{\vec{t}\cdot\vec{\alpha}}z^{c(\vec{\alpha})})^{\deg(P)}\right) H^*_{\vec{t},\nu}(z). $$

Since, for all $\vec{\alpha}\not\in\R_\nu$, $\chi_{r_n}^{\nu\vec{\alpha}}$ is a non-trivial character we get that
$$ \prod_{\vec{\alpha}\not\in\R_{\nu}}\prod_P \left(1+\chi_{r_n}^{\nu\vec{\alpha}}(P) (\xi_{r_n}^{\vec{t}\cdot\vec{\alpha}}z^{c(\vec{\alpha})})^{\deg(P)}\right)$$
is an entire function. Moreover,
\begin{align*}
H^*_{\vec{t},\nu}(z) & = \prod_{P} \left(\frac{ 1 + \sum_{\vec{\alpha}\in\R_{\nu}}(\xi_{r_n}^{\vec{t}\cdot\vec{\alpha}} z^{c(\vec{\alpha})})^{\deg(P)} + \sum_{\vec{\alpha}\not\in\R_{\nu}} \chi_{r_n}^{\nu\vec{\alpha}}(P) (\xi_{r_n}^{\vec{t}\cdot\vec{\alpha}}z^{c(\vec{\alpha})})^{\deg(P)}}{\prod_{\vec{\alpha}\in\R_\nu}\left(1 + (\xi_{r_n}^{\vec{t}\cdot\vec{\alpha}} z^{c(\vec{\alpha})})^{\deg(P)}\right) \prod_{\vec{\alpha}\not\in\R_\nu} \left(1+\chi_{r_n}^{\nu\vec{\alpha}}(P) (\xi_{r_n}^{\vec{t}\cdot\vec{\alpha}}z^{c(\vec{\alpha})})^{\deg(P)}\right)  }\right)\\
&= \prod_{P} \left(1 - \frac{h_P(z)}{\prod_{\vec{\alpha}\in\R_\nu}\left(1 + (\xi_{r_n}^{\vec{t}\cdot\vec{\alpha}} z^{c(\vec{\alpha})})^{\deg(P)}\right) \prod_{\vec{\alpha}\not\in\R_\nu} \left(1+\chi_{r_n}^{\nu\vec{\alpha}}(P) (\xi_{r_n}^{\vec{t}\cdot\vec{\alpha}}z^{c(\vec{\alpha})})^{\deg(P)}\right)}\right)
\end{align*}
where
\begin{align*}
h_p(z) = &  \prod_{\vec{\alpha}\in\R_\nu}\left(1 + (\xi_{r_n}^{\vec{t}\cdot\vec{\alpha}} z^{c(\vec{\alpha})})^{\deg(P)}\right) \prod_{\vec{\alpha}\not\in\R_\nu} \left(1+\chi_{r_n}^{\nu\vec{\alpha}}(P) (\xi_{r_n}^{\vec{t}\cdot\vec{\alpha}}z^{c(\vec{\alpha})})^{\deg(P)}\right)\\
& - \left(1 + \sum_{\vec{\alpha}\in\R_{\nu}}(\xi_{r_n}^{\vec{t}\cdot\vec{\alpha}} z^{c(\vec{\alpha})})^{\deg(P)} + \sum_{\vec{\alpha}\not\in\R_{\nu}} \chi_{r_n}^{\nu\vec{\alpha}}(P) (\xi_{r_n}^{\vec{t}\cdot\vec{\alpha}}z^{c(\vec{\alpha})})^{\deg(P)}\right)\\
= & O\left(z^{\underset{\vec{\alpha}\not=\vec{\beta}}{\min}(c(\vec{\alpha}) + c(\vec{\beta}))}\right)= O\left(z^{2c_1}\right).
\end{align*}

Therefore, if $|z|<q^{-1/2c_1}$, then $H^*_{\vec{t},\nu}(z)$ converges absolutely and hence so does $H_{\vec{t},\nu}(z)$.

\end{proof}

For $0\leq a \leq r_n-1$, and $i=1,\dots,\eta$, define
\begin{align}\label{Rpoleset}
\R_{\vec{t},\nu;a,i} = \{\vec{\alpha}\in\R_{\nu} : c(\vec{\alpha})=c_i \mbox{ and }\vec{t}\cdot\vec{\alpha}\equiv a \Mod{r_n}\}
\end{align}
and let
\begin{align}\label{poleorder}
m_{\vec{t},\nu;a,i} = |\R_{\vec{t},\nu;a,i}|.
\end{align}

\begin{cor}\label{analyticcor}

$A_{\vec{t},\nu}(z)$ is meromorphic on the disc $|z|<q^{-1/2c_1}$ with poles of order $m_{\vec{t},\nu;a,i}$ at
$$z=\xi_{c_i}^k\left(q\xi_{r_n}^a\right)^{-1/c_i}$$
for $k=1,\dots,c_i$.

\end{cor}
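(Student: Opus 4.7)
The plan is to read off the pole structure of $A_{\vec{t},\nu}(z)$ directly from the factorization derived just before the statement:
\begin{equation*}
A_{\vec{t},\nu}(z) = \prod_{\vec{\alpha}\in\R_{\nu}} \frac{Z_K\!\left(\xi_{r_n}^{\vec{t}\cdot\vec{\alpha}}z^{c(\vec{\alpha})}\right)}{Z_K\!\left(\xi_{r_n}^{2\vec{t}\cdot\vec{\alpha}}z^{2c(\vec{\alpha})}\right)} \, H_{\vec{t},\nu}(z).
\end{equation*}
By Lemma \ref{analyticlem}, $H_{\vec{t},\nu}(z)$ is holomorphic on $|z|<q^{-1/2c_1}$, so the meromorphic behavior of $A_{\vec{t},\nu}(z)$ on this disc is determined entirely by the finite product of $Z_K$ ratios.

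Since $Z_K(w)=(1-qw)^{-1}$, each factor in the product equals
\begin{equation*}
\frac{1-q\xi_{r_n}^{2\vec{t}\cdot\vec{\alpha}}z^{2c(\vec{\alpha})}}{1-q\xi_{r_n}^{\vec{t}\cdot\vec{\alpha}}z^{c(\vec{\alpha})}},
\end{equation*}
which contributes simple poles at the $c(\vec{\alpha})$ solutions of $z^{c(\vec{\alpha})}=(q\xi_{r_n}^{\vec{t}\cdot\vec{\alpha}})^{-1}$, all of modulus $q^{-1/c(\vec{\alpha})}$, and zeros at the $2c(\vec{\alpha})$ solutions of $z^{2c(\vec{\alpha})}=(q\xi_{r_n}^{2\vec{t}\cdot\vec{\alpha}})^{-1}$, all of modulus $q^{-1/2c(\vec{\alpha})}$. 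Since $c(\vec{\alpha})\geq c_1$, the zeros all lie on or outside the circle $|z|=q^{-1/2c_1}$, hence outside our open disc; in particular they cannot cancel any pole inside the disc. (One can also check directly that a pole of one factor can never coincide with a zero of another factor: at a pole one has $|z|^{c(\vec{\alpha})}=q^{-1}$, so $|z|^{2c(\vec{\beta})}=q^{-2c(\vec{\beta})/c(\vec{\alpha})}\neq q^{-1}$ unless $c(\vec{\beta})=c(\vec{\alpha})/2$, in which case the phases still fail to match since $q^{-2}\neq q^{-1}$.)

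It then remains to count, for a fixed prospective pole location $z_0=\xi_{c_i}^k(q\xi_{r_n}^a)^{-1/c_i}$, how many factors in $\prod_{\vec{\alpha}\in\R_\nu}$ are actually singular at $z_0$. Since $|z_0|=q^{-1/c_i}$, the factor indexed by $\vec{\alpha}$ contributes a pole iff $|z_0|^{c(\vec{\alpha})}=q^{-1}$ (forcing $c(\vec{\alpha})=c_i$) and $z_0^{c_i}=(q\xi_{r_n}^{\vec{t}\cdot\vec{\alpha}})^{-1}$ (forcing $\vec{t}\cdot\vec{\alpha}\equiv a\pmod{r_n}$). These are exactly the defining conditions of $\R_{\vec{t},\nu;a,i}$ in \eqref{Rpoleset}, so the pole has order $m_{\vec{t},\nu;a,i}=|\R_{\vec{t},\nu;a,i}|$, as required.

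The only step that requires any real care is the non-cancellation argument in the second paragraph — the rest is mechanical bookkeeping — and even that reduces to a short modulus/phase check; this is therefore where I would spend the writing effort, while invoking Lemma \ref{analyticlem} as a black box for the $H_{\vec{t},\nu}$ factor.
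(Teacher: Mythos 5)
Your argument is correct and is exactly the paper's (whose proof is the one line ``immediate from Lemma \ref{analyticlem} and the factors of $Z_K(z)$ appearing''): you read the poles off the factorization $A_{\vec{t},\nu}(z)=\prod_{\vec{\alpha}\in\R_{\nu}} Z_K(\xi_{r_n}^{\vec{t}\cdot\vec{\alpha}}z^{c(\vec{\alpha})})/Z_K(\xi_{r_n}^{2\vec{t}\cdot\vec{\alpha}}z^{2c(\vec{\alpha})})\cdot H_{\vec{t},\nu}(z)$, invoke the lemma for $H_{\vec{t},\nu}$, and your count of which factors are singular at a given point is precisely the definition \eqref{Rpoleset}. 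The only blemish is the last clause of your parenthetical: when $c(\vec{\beta})=c(\vec{\alpha})/2$ the moduli do agree and the phases can also agree, so that justification is wrong --- but the case is moot, since such a pole has $|z|=q^{-1/c(\vec{\alpha})}\geq q^{-1/2c_1}$ and so already lies on or outside the boundary circle, exactly as your preceding (and sufficient) sentence establishes.
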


\begin{proof}

Immediate from Lemma \ref{analyticlem} and the factors of $Z_K(z)$ appearing.

\end{proof}

\begin{rem}

It is highly possible that $m_{\vec{t},\nu;a,i}=0$ for some values of $\vec{t},\nu,a,i$. In this case when we say a pole of order $0$, we mean there is no pole.

\end{rem}


\section{Residue Calculations}\label{rescalcsec}

Now, we can calculate the residues of $A_{\vec{t},\nu}(z)$ at each of its poles.

\begin{lem}\label{rescalclem1}

Let $a,i$ be such that $m_{\vec{t},\nu;a,i}\not=0$, then for any $1\leq k \leq c_i$,
$$\Res_{z=\xi_{c_i}^k \left( q\xi_{r_n}^a \right)^{-1/c_i}}\left( \frac{A_{\vec{t},\nu}(z)}{z^{D+1}} \right) = P_{\vec{t},\nu;a,i,k}(D) q^{\frac{D}{c_i}}$$
where $P_{\vec{t},\nu;a,i,k}$ is a quasi-polynomial of degree $(m_{\vec{t},\nu;a,i}-1)$ with leading coefficient $-C_{\vec{t},\nu;a,i,k}$ such that
$$C_{\vec{t},\nu;a,i,k} = \frac{1}{(m_{\vec{t},\nu;a,i}-1)!}\left(\frac{1-q^{-1}}{c_i}\right)^{m_{\vec{t},\nu;a,i}} \xi_{c_i}^{-kD}\left(\xi_{r_n}^a\right)^{\frac{D}{c_i}} H_{\vec{t},\nu;a,i}(\xi^k_{c_i}(\xi_{r_n}^aq)^{-1/c_i})$$
and $H_{\vec{t},\nu;a,i}$ is defined in the proof.
\end{lem}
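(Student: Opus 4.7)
The plan is to isolate the pole-creating factor in $A_{\vec{t},\nu}(z)$ at $z_0 := \xi_{c_i}^k(q\xi_{r_n}^a)^{-1/c_i}$ and then apply the standard formula for the residue at a pole of order $m := m_{\vec{t},\nu;a,i}$. From the factorization at the start of Section 6, the only Euler factors of $A_{\vec{t},\nu}(z)$ that are singular at $z_0$ are the $m$ copies of $Z_K(\xi_{r_n}^{\vec{t}\cdot\vec{\alpha}}z^{c(\vec{\alpha})})=(1-q\xi_{r_n}^a z^{c_i})^{-1}$ indexed by $\vec{\alpha}\in\R_{\vec{t},\nu;a,i}$. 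I would therefore write
$$A_{\vec{t},\nu}(z)=\frac{H_{\vec{t},\nu;a,i}(z)}{(1-q\xi_{r_n}^a z^{c_i})^{m}},$$
where $H_{\vec{t},\nu;a,i}$ is defined as the product of everything else: the remaining $Z_K$-ratios for $\vec{\alpha}\in\R_\nu\setminus\R_{\vec{t},\nu;a,i}$, the $m$ paired factors $(1-q\xi_{r_n}^{2a}z^{2c_i})$, and $H_{\vec{t},\nu}(z)$. By Lemma \ref{analyticlem}, $H_{\vec{t},\nu;a,i}$ is analytic in a neighborhood of $z_0$.

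Applying the order-$m$ residue formula gives
$$\Res_{z=z_0}\frac{A_{\vec{t},\nu}(z)}{z^{D+1}}=\frac{1}{(m-1)!}\lim_{z\to z_0}\frac{d^{m-1}}{dz^{m-1}}\left[\frac{(z-z_0)^m H_{\vec{t},\nu;a,i}(z)}{(1-q\xi_{r_n}^a z^{c_i})^m\,z^{D+1}}\right],$$
which I would expand by the Leibniz rule. The leading-in-$D$ contribution comes from dropping all $m-1$ derivatives onto $z^{-(D+1)}$, producing the factor $(-1)^{m-1}(D+1)(D+2)\cdots(D+m-1)\,z_0^{-D-m}$; every other Leibniz term is a polynomial in $D$ of strictly smaller degree and will be absorbed into the lower-order coefficients of the quasi-polynomial $P_{\vec{t},\nu;a,i,k}$.

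Two explicit evaluations at $z_0$ finish the calculation. Using $q\xi_{r_n}^a z_0^{c_i}=1$, a single application of L'Hôpital gives $\lim_{z\to z_0}(z-z_0)/(1-q\xi_{r_n}^a z^{c_i})=-z_0/c_i$, hence the bracketed ratio tends to $(-z_0/c_i)^m$ at $z_0$. Simultaneously, each of the $m$ factors $1-q\xi_{r_n}^{2a}z^{2c_i}$ pulled into $H_{\vec{t},\nu;a,i}$ evaluates to $1-q^{-1}$ at $z_0$ because $q\xi_{r_n}^{2a}z_0^{2c_i}=q^{-1}$; extracting this pure $(1-q^{-1})^m$ factor from $H_{\vec{t},\nu;a,i}(z_0)$ is precisely what the statement does. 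Substituting $z_0^{-D}=\xi_{c_i}^{-kD}(\xi_{r_n}^a)^{D/c_i}q^{D/c_i}$, collecting signs from $(-1)^{m-1}$ and $(-z_0/c_i)^m$, and pulling out the factor $q^{D/c_i}$ produces the claimed leading coefficient
$$-\frac{1}{(m-1)!}\left(\frac{1-q^{-1}}{c_i}\right)^m\xi_{c_i}^{-kD}(\xi_{r_n}^a)^{D/c_i}H_{\vec{t},\nu;a,i}(z_0).$$
The roots of unity $\xi_{c_i}^{-kD}$ and $(\xi_{r_n}^a)^{D/c_i}$ are exactly what makes each coefficient periodic in $D$, so $P_{\vec{t},\nu;a,i,k}$ is a genuine quasi-polynomial rather than an ordinary polynomial.

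The main obstacle is bookkeeping rather than conceptual: one must specify $H_{\vec{t},\nu;a,i}$ cleanly and verify that the many lower-order Leibniz terms (each carrying its own root-of-unity dependence in $D$) really do contribute only to degrees strictly below $m-1$, so that $-C_{\vec{t},\nu;a,i,k}$ genuinely survives as the leading coefficient and is not cancelled against any other Leibniz contribution.
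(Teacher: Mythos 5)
Your proposal is correct and follows essentially the same route as the paper: isolate the order-$m_{\vec{t},\nu;a,i}$ pole factor $(1-q\xi_{r_n}^a z^{c_i})^{-m}$, apply the standard residue formula with the Leibniz rule, and observe that the top degree in $D$ comes from placing all $m-1$ derivatives on $z^{-(D+1)}$. The only difference is cosmetic bookkeeping (the paper keeps the numerators $(1-q\xi_{r_n}^{2a}z^{2c_i})^{m}$ inside the pole-creating ratio rather than inside $H_{\vec{t},\nu;a,i}$), and you correctly reconcile this when extracting the $(1-q^{-1})^{m}$ factor, so the resulting leading coefficient agrees with the lemma.
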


\begin{proof}

\begin{align*}
\frac{A_{\vec{t},\nu}(z)}{z^{D+1}} & = \frac{1}{z^{D+1}} \prod_{\vec{\alpha}\in\R_{\nu}} \frac{Z_K(\xi_{r_n}^{\vec{t}\cdot\vec{\alpha}}z^{c(\vec{\alpha})})} {Z_K(\xi_{r_n}^{2\vec{t}\cdot\vec{\alpha}}z^{2c(\vec{\alpha})})} H_{\vec{t},\nu}(z) \\
& = \frac{1}{z^{D+1}} \underset{m_{\vec{t},\nu;b,j\not=0}} {\prod_{j=1}^{\eta} \prod_{b=0}^{r_n-1}} \left(\frac{Z_K(\xi_{r_n}^{b}z^{c_j})} {Z_K(\xi_{r_n}^{2b}z^{2c_j})}\right)^ {m_{\vec{t},\nu;b,j}} H_{\vec{t},\nu}(z)\\
& = \frac{1}{z^{D+1}}\left(\frac{1-q\xi_{r_n}^{2a}z^{2c_i}}{1-q\xi_{r_n}^{a}z^{c_i}}\right)^{m_{\vec{t},\nu;a,i}} H_{\vec{t},\nu;a,i}(z)
\end{align*}
where
$$H_{\vec{t},\nu;a,i}(z) = \underset{\substack{(b,j)\not=(a,i) \\ m_{\vec{t},\nu;b,j\not=0}}} {\prod_{j=1}^{\eta} \prod_{b=0}^{r_n-1}}\left(\frac{Z_K(\xi_{r_n}^{b}z^{c_j})} {Z_K(\xi_{r_n}^{2b}z^{2c_j})}\right)^ {m_{\vec{t},\nu;b,j}} H_{\vec{t},\nu}(z).$$

Therefore, for any $1\leq k \leq c_i$, if we let
$$R_{a,i,k}(z) = \frac{z^{c_i}-(q\xi_{r_n}^a)^{-1}}{z-\xi_{c_i}^k(q\xi_{r_n}^a)^{-1/c_i}},$$
then

\begin{align*}
& (m_{\vec{t},\nu;a,i}-1)!\Res_{z=\xi^k_{c_i}(\xi_{r_n}^aq)^{-1/c_i}} \left(\frac{A_{\vec{t},\nu}(z)}{z^{D+1}}\right)\\
= & \lim_{z\to \xi^k_{c_i}(\xi_{r_n}^aq)^{-1/c_i}} \frac{d^{m_{\vec{t},\nu;a,i}-1}}{dz^{m_{\vec{t},\nu;a,i}-1}} \frac{(z-\xi_{c_i}^k(\xi_{r_n}^aq)^{-1/c_i})^{m_{\vec{t},\nu;a,i}}}{z^{D+1}}\left(\frac{1-q\xi_{r_n}^{2a}z^{2c_i}}{1-q\xi_{r_n}^{a} z^{c_i}} \right)^{m_{\vec{t},\nu;a,i}} H_{\vec{t},\nu;a,i}(z) \\
= & \lim_{z\to \xi^k_{c_i}(\xi_{r_n}^aq)^{-1/c_i}} \frac{d^{m_{\vec{t},\nu;a,i}-1}}{dz^{m_{\vec{t},\nu;a,i}-1}} \frac{1}{z^{D+1}}\left(\frac{1-q\xi_{r_n}^{2a}z^{2c_i}}{-q\xi_{r_n}^aR_{a,i,k}(z)} \right)^{m_{\vec{t},\nu;a,i}} H_{\vec{t},\nu;a,i}(z) \\
= & \lim_{z\to \xi^k_{c_i}(\xi_{r_n}^aq)^{-1/c_i}}\sum_{j=0}^{m_{\vec{t},\nu;a,i}-1}\binom{m_{\vec{t},\nu;a,i}-1}{j} \frac{d^j}{dz^j} \left(\frac{1}{z^{D+1}}\right) \frac{d^{m_{\vec{t},\nu;a,i}-1-j}}{dz^{m_{\vec{t},\nu;a,i}-1-j}} \left(\frac{1-q\xi_{r_n}^{2a}z^{2c_i}}{-q\xi_{r_n}^aR_{a,i,k}(z)} \right)^{m_{\vec{t},\nu;a,i}}\times \\
& H_{\vec{t},\nu;a,i}(z) \\
= & \sum_{j=0}^{m_{\vec{t},\nu;a,i}-1}\binom{m_{\vec{t},\nu;a,i}-1}{j} (-1)^j(D+1)\cdots(D+j) \xi_{c_i}^{-k(D+j+1)} (\xi_{r_n}^aq)^{(D+j+1)/c_i} \times \\
& \frac{d^{m_{\vec{t},\nu;a,i}-1-j}}{dz^{m_{\vec{t},\nu;a,i}-1-j}}\left(\frac{1-q\xi_{r_n}^{2a}z^{2c_i}} {-q\xi_{r_n}^aR_{a,i,k} (z)} \right)^{m_{\vec{t},\nu;a,i}} H_{\vec{t},\nu;a,i}(z) \bigg|_{z = \xi^k_{c_i}(\xi_{r_n}^aq)^{-1/c_i}}\\
= & P_{\vec{t},\nu;a,i,k}(D)q^{\frac{D}{c_i}}
\end{align*}
where $P_{\vec{t},\nu;a,i,k}$ is a quasi-polynomial of degree $m_{\vec{t},\nu;a,i}-1$. Moreover, we see that the leading coefficient of $P_{\vec{t},\nu;a,i,k}$ arises when $j=m_{\vec{t},\nu;a,i}-1$. That is
\begin{align*}
P_{\vec{t},\nu;a,i,k}(D)q^{\frac{D}{c_i}} = & (-D)^{m_{\vec{t},\nu;a,i}-1} \xi_{c_i}^{-k(D+m_{\vec{t},\nu;a,i})} \left(\xi^a_{r_n}q\right)^{(D+m_{\vec{t},\nu;a,i})/c_i} \left(\frac{1-q^{-1}}{-c_i\xi_{c_i}^{k(c_i-1)} \left(\xi_{r_n}^aq \right)^{1/c_i} }\right)^{m_{\vec{t},\nu;a,i}} \times \\
& H_{\vec{t},\nu;a,i}(\xi^k_{c_i}(\xi_{r_n}^aq)^{-1/c_i})\left(1+O\left(\frac{1}{D}\right)\right) \\
= & -\left(\frac{1-q^{-1}}{c_i}\right)^{m_{\vec{t},\nu;a,i}} \xi_{c_i}^{-kD}  D^{m_{\vec{t},\nu;a,i}-1} \left(\xi_{r_n}^aq\right)^{\frac{D}{c_i}} H_{\vec{t},\nu;a,i}(\xi^k_{c_i}(\xi_{r_n}^aq)^{-1/c_i}) \times \\
& \left(1+O\left(\frac{1}{D}\right)\right).
\end{align*}

\end{proof}

\begin{cor}\label{rescalccor1}

Let $m_{\vec{t},\nu,i} = \underset{0\leq a \leq r_n-1}{\max}(m_{\vec{t},\nu;a,i})$. Let $0<\delta_1<q^{-1/c_1}$, $\delta_2 = \frac{1+\epsilon}{2c_1}$ for some $\epsilon>0$ and let $C_{\delta_1} = \{z\in\mathbb{C} : |z|=\delta_1\}$ oriented counterclockwise and $C_{\delta_2} = \{z\in\mathbb{C}: |z| = q^{-\delta_2}\}$ oriented clockwise. Then
$$\frac{1}{2\pi i} \oint_{C_{\delta_1}+C_{\delta_2}} \frac{A_{\vec{t},\nu}(z)}{z^{D+1}} dz = \sum_{i=1}^{\eta}P_{\vec{t},\nu,i}(D)q^{\frac{D}{c_i}} $$
where $P_{\vec{t},\nu,i}$ is a quasi-polynomial such that
$$P_{\vec{t},\nu,i}(D) = C_{\vec{t},\nu,i}D^{m_{\vec{t},\nu;i}-1} + O\left(D^{m_{\vec{t},\nu;i}-2}\right) $$
with
$$C_{\vec{t},\nu,i} = \sum_{\substack{a=0 \\ m_{\vec{t},\nu;a,i} = m_{\vec{t},\nu;i}}}^{r_n-1} \sum_{k=0}^{c_i-1} C_{\vec{t},\nu;a,i,k}.$$

\end{cor}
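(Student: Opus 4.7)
The plan is to apply the residue theorem to the annular region $\delta_1<|z|<q^{-\delta_2}$ bounded by the two contours. Because $C_{\delta_1}$ is oriented counterclockwise and $C_{\delta_2}$ clockwise, the residue at $z=0$ (which lies inside both discs) cancels when the two integrals are added, leaving
$$\frac{1}{2\pi i}\oint_{C_{\delta_1}+C_{\delta_2}} \frac{A_{\vec{t},\nu}(z)}{z^{D+1}}\,dz = -\sum_{i,a,k} \Res_{z=\xi_{c_i}^k(q\xi_{r_n}^a)^{-1/c_i}}\!\left(\frac{A_{\vec{t},\nu}(z)}{z^{D+1}}\right),$$
where the sum runs over the poles of $A_{\vec{t},\nu}(z)$ catalogued in Corollary \ref{analyticcor}. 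Before invoking this, I would check that every such pole genuinely lies in the annulus: the inequality $q^{-1/c_i}>\delta_1$ is immediate from $\delta_1<q^{-1/c_1}\le q^{-1/c_i}$, while $q^{-1/c_i}<q^{-(1+\epsilon)/(2c_1)}$ reduces to $c_i<2c_1/(1+\epsilon)$, which holds for every $i$ once $\epsilon$ is small enough, using that $c_i=|G|-|G|/s_i$ with $s_1\ge 2$ forces $c_i<2c_1$ for all $i$. Lemma \ref{analyticlem} guarantees there are no other poles in the annulus.

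Next I would substitute the residue formula of Lemma \ref{rescalclem1}, which writes each residue as a quasi-polynomial $P_{\vec{t},\nu;a,i,k}(D)$ of degree $m_{\vec{t},\nu;a,i}-1$ times $q^{D/c_i}$ with leading coefficient $-C_{\vec{t},\nu;a,i,k}$. Collecting terms by the exponent $q^{D/c_i}$ and setting
$$P_{\vec{t},\nu,i}(D) := -\sum_{a=0}^{r_n-1}\sum_{k=1}^{c_i} P_{\vec{t},\nu;a,i,k}(D)$$
yields the identity in the corollary. The degree of $P_{\vec{t},\nu,i}$ is at most $\max_a(m_{\vec{t},\nu;a,i}-1)=m_{\vec{t},\nu;i}-1$, and only those indices $a$ attaining $m_{\vec{t},\nu;a,i}=m_{\vec{t},\nu;i}$ contribute to the leading coefficient; the overall minus sign cancels the $-C_{\vec{t},\nu;a,i,k}$ coming from the lemma, producing exactly $C_{\vec{t},\nu,i}=\sum_{a:\,m_{\vec{t},\nu;a,i}=m_{\vec{t},\nu;i}}\sum_{k=1}^{c_i} C_{\vec{t},\nu;a,i,k}$ as stated (the indexing $k=0,\dots,c_i-1$ in the statement is equivalent to $k=1,\dots,c_i$ since $\xi_{c_i}^0=\xi_{c_i}^{c_i}$).

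The only thing that requires any real care is sign bookkeeping: one must verify that the minus arising from the clockwise orientation of $C_{\delta_2}$ is the precise counterpart to the minus built into the leading coefficient of $P_{\vec{t},\nu;a,i,k}$ from Lemma \ref{rescalclem1}, so that they cancel to give the positive $C_{\vec{t},\nu,i}$ in the statement. All remaining steps — the cancellation of the $z=0$ residue, the identification of the pole set with the annulus, and the grouping by the factor $q^{D/c_i}$ — are purely formal given the preceding analytic and residue lemmas.
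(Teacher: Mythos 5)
Your proposal is correct and follows essentially the same route as the paper: apply the residue theorem over the annulus bounded by the two contours (the clockwise orientation of $C_{\delta_2}$ producing minus the residues at the poles located by Corollary \ref{analyticcor}), substitute the residue formula of Lemma \ref{rescalclem1}, and group terms by the factor $q^{D/c_i}$. Your additional checks — that the poles actually lie in the annulus when $c_i<2c_1$, and the sign/indexing bookkeeping for $k$ — are details the paper passes over silently but handles identically in spirit.
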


\begin{proof}\label{rescalccor2}

By Cauchy's Residue Theorem, and the fact that the larger disc, $C_{\delta_2}$, is oriented clockwise,
\begin{align*}
\frac{1}{2\pi i} \oint_{C_{\delta_1}+C_{\delta_2}} \frac{A_{\vec{t},\nu}(z)}{z^{D+1}} dz & = \underset{m_{\vec{t},\nu;a,i}\not=0} {\sum_{i=1}^{\eta} \sum_{a=0}^{r_n-1}} \sum_{k=0}^{c_i} -\Res_{z=\xi_{c_i}^k \left( q\xi_{r_n}^a \right)^{-1/c_i}}\left( \frac{A_{\vec{t},\nu}(z)}{z^{D+1}} \right)\\
& = \underset{m_{\vec{t},\nu;a,i}\not=0} {\sum_{i=1}^{\eta} \sum_{a=0}^{r_n-1} } \sum_{k=0}^{c_i} -P_{\vec{t},\nu;a,i,k}(D) q^{\frac{D}{c_i}}\\
& = \sum_{i=1}^{\eta}P_{\vec{t},\nu,i}(D)q^{\frac{D}{c_i}}.
\end{align*}
The fact that $P_{\vec{t},\nu,i}(D)$ satisfies the conditions in the statement follow directly from Lemma \ref{rescalclem1}
\end{proof}

\begin{rem}

Now, we are unable to determine if $C_{\vec{t},\nu,i}$ is non-zero. Hence we can only give a bound on the degree of the $P_{\vec{t},\nu,i}$. This is why in the statement of the main theorems we say "of degree at most" instead of give the exact degree.

\end{rem}

\begin{prop}\label{rescalcprop1}
Let
$$m_i = \underset{\substack{ \vec{t}\in\R \\ \nu\in\M }}{\max}(m_{\vec{t},\nu,i}).$$
If there exists a solution to \eqref{genform2} and \eqref{equiv1}, then for every $\epsilon>0$,
$$|\F_{D;\vec{k},E}| = \sum_{i=1}^{\eta} P_i(D)q^{\frac{D}{c_i}} + O\left(q^{(\frac{1}{2}+\epsilon)\frac{D}{c_1}}\right)$$
where $P_i$ is a quasi-polynomial such that of degree at most $(m_i-1)$. Otherwise, if there does not exist a solution to \eqref{genform2} and \eqref{equiv1}, then $|\F_{D,\vec{k},E}|=0$.
\end{prop}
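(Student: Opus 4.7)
The plan is to apply Proposition~\ref{firstresprop} to rewrite $|\F_{D;\vec{k},E}|$ as a contour integral of $F_{\vec{k},E}(z)/z^{D+1}$ over $C_{\delta_1}$, and then insert the decomposition
$$F_{\vec{k},E}(z) = \left(\frac{1}{r_1\cdots r_n}\right)^{\ell+1}\sum_{\vec{t}\in\R'}\sum_{\nu\in\M} E^{-\nu}\xi_{r_n}^{-\vec{t}\cdot\vec{k}} A_{\vec{t},\nu}(z)$$
derived in Section~\ref{gensersec}. This reduces the problem to analyzing each integral $\frac{1}{2\pi i}\oint_{C_{\delta_1}} A_{\vec{t},\nu}(z)/z^{D+1}\,dz$ separately.

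For each pair $(\vec{t},\nu)$ I would then apply Corollary~\ref{rescalccor1}. Rearranging its identity (so that the outer contour is now oriented counterclockwise) gives
$$\frac{1}{2\pi i}\oint_{C_{\delta_1}}\frac{A_{\vec{t},\nu}(z)}{z^{D+1}}\,dz = \sum_{i=1}^{\eta} P_{\vec{t},\nu,i}(D)\,q^{D/c_i} + \frac{1}{2\pi i}\oint_{C_{\delta_2}^{+}}\frac{A_{\vec{t},\nu}(z)}{z^{D+1}}\,dz.$$
Weighting by the scalar $(r_1\cdots r_n)^{-(\ell+1)}E^{-\nu}\xi_{r_n}^{-\vec{t}\cdot\vec{k}}$, summing over all $\vec{t}\in\R'$ and $\nu\in\M$, and collecting terms by the distinct growth rates $q^{D/c_i}$ produces, for each $i$, a single quasi-polynomial $P_i(D)$. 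Since a finite sum of quasi-polynomials is again a quasi-polynomial (take the least common multiple of the periods) of degree at most the maximum individual degree, $\deg P_i \le \max_{\vec{t},\nu} m_{\vec{t},\nu,i}-1 = m_i - 1$.

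To dispose of the outer contour, the main (though mild) technical step is to show that $A_{\vec{t},\nu}(z)$ is uniformly bounded on $C_{\delta_2}$. Indeed, $|z|=q^{-\delta_2}=q^{-(1+\epsilon)/(2c_1)}$ lies strictly inside the disc of absolute convergence $|z|<q^{-1/(2c_1)}$ of $H_{\vec{t},\nu}(z)$ provided by Lemma~\ref{analyticlem}; the only remaining singularities in this disc come from the $Z_K$-factors and sit on the circles $|z|=q^{-1/c_i}$, all of which are at a positive distance from $C_{\delta_2}$ once $\epsilon$ is small. A trivial estimate then bounds the integral by $O(q^{D\delta_2})=O(q^{(1/2+\epsilon/2)D/c_1})$. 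Any pole with $q^{-1/c_i}\ge q^{-\delta_2}$ lies outside $C_{\delta_2}$ and so does not contribute to the residue sum, but its would-be main term $q^{D/c_i}$ is already dominated by this bound, so absorbing it into the error loses nothing. Relabelling $\epsilon$ yields the stated error $O(q^{(1/2+\epsilon)D/c_1})$.

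Finally, the ``no solution'' case is immediate from the definitions \eqref{Fset3} and \eqref{Fset2}: if no $\vec{d}(\vec{\alpha})$ simultaneously satisfies \eqref{genform2} and \eqref{equiv1}, then the union defining $\F_{D;\vec{k},E}$ consists entirely of empty sets, forcing $|\F_{D;\vec{k},E}|=0$. The only genuine analytic work in the whole argument is the uniform bound on $A_{\vec{t},\nu}(z)$ along $C_{\delta_2}$, which is more bookkeeping than obstacle and rests entirely on the strict inequality $\delta_2<1/(2c_1)$ combined with Lemma~\ref{analyticlem}.
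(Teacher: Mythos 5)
Your proposal is correct and follows essentially the same route as the paper: expand $F_{\vec{k},E}$ into the $A_{\vec{t},\nu}$, apply Proposition~\ref{firstresprop} and Corollary~\ref{rescalccor1} to convert the $C_{\delta_1}$-integral into the residue sum plus an outer-contour integral, bound the latter trivially using Lemma~\ref{analyticlem}, and dispose of the no-solution case directly from \eqref{Fset2} and \eqref{Fset3}. Your extra remark about poles with $q^{-1/c_i}\ge q^{-\delta_2}$ lying outside $C_{\delta_2}$ is a point the paper only addresses in the remark following the proposition, but it changes nothing substantive.
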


\begin{proof}

Recall that
$$\F_{D;\vec{k},E} = \bigcup_{\vec{d}(\vec{\alpha})} \F_{\vec{d}(\vec{\alpha});\vec{k},E}$$
where the union is over all solutions to \eqref{genform2} where $D=2g+2|G|-2-c(\vec{k})$. Therefore, if there are no solutions to \eqref{genform2}, we have an empty union, so $\F_{D;\vec{k},E}=\emptyset$. Further, if there are solution to \eqref{genform2} but none of which that satisfy \eqref{equiv1} then $\F_{D;\vec{k},E}$ would be a union of empty sets and thus empty itself. Therefore, from now on, we will always assume there is a solution to \eqref{genform2} and \eqref{equiv1}.

Let $C_{\delta_1}$ and $C_{\delta_2}$ be as defined in Corollary \ref{rescalccor1}. Then
\begin{align*}
\frac{1}{2\pi i}\oint_{C_{\delta_1}+C_{\delta_2}} \frac{F_{\vec{k},E}(z)}{z^{D+1}} dz & = \left(\frac{1}{r_1\cdots r_n}\right)^{\ell+1} \sum_{\vec{t}\in\R'} \sum_{\nu\in\M} \xi_{r_n}^{-\vec{t}\cdot\vec{k}}E^{-\nu} \frac{1}{2\pi i}\oint_{C_{\delta_1}+C_{\delta_2}} \frac{A_{\vec{t},\nu}(z)}{z^{D+1}} dz \\
& =  \left(\frac{1}{r_1\cdots r_n}\right)^{\ell+1} \sum_{\vec{t}\in\R'} \sum_{\nu\in\M} \xi_{r_n}^{-\vec{t}\cdot\vec{k}}E^{-\nu} \sum_{i=1}^{\eta}P_{\vec{t},\nu,i}(D)q^{\frac{D}{c_i}}\\
& = \sum_{i=1}^{\eta} P_i(D)q^{\frac{D}{c_i}}
\end{align*}
where $P_i$ is a quasi-polynomial of degree at most $m_i$.

Now, by Proposition \ref{firstresprop}, we know that
$$\frac{1}{2\pi i}\oint_{C_{\delta_1}}\frac{F_{\vec{k},E}(z)}{z^{D+1}}dz = -|\F_{D;\vec{k},E}|.$$
Moreover,
$$\left|\frac{1}{2\pi i}\oint_{C_{\delta_2}} \frac{F_{\vec{k},E}(z)}{z^{D+1}}dz \right| = O\left( q^{(\frac{1}{2}+\epsilon) \frac{D}{c_1}} \right) $$
where the implied constant is the maximum values of $F_{\vec{k},E}(z)$ on $C_{\delta_2}$.
\end{proof}

\begin{rem}

If we let $c(\vec{\alpha})$ be any integers, then we could have that $\frac{D}{c_i} \leq \frac{D}{2c_1}$ and thus part of the main term could be absorbed into the error term. However, if we let $c(\vec{\alpha}) = |G|-\frac{|G|}{e(\vec{\alpha})}$, then we actually have that $\frac{D}{c_i}>\frac{D}{2c_1}$ for all $i=1,\dots,\eta$. So for small enough $\epsilon$, none of our main terms can be absorbed into the error term.

\end{rem}


\section{Proofs of the Main Theorems} \label{proofsec}

All that remains is to combine Proposition \ref{rescalcprop1} and Lemma \ref{inexlem}. From now on, we will fix the $c(\vec{\alpha})=|G|-\frac{|G|}{e(\vec{\alpha})}$. But first, we present a little more notation in order to deal with the subgroups of $G$ as used in Lemma \ref{inexlem}.

As in Section \ref{curvesec}, there is a natural bijection from $G\setminus\{e\}$ to $\R$. For every $H\subset G$, let $\R_H$ be the image of $H$ under this natural bijection. Recall that $\eta=\eta_G$ is the number of non-trivial divisors of $\exp(G)=r_n$. Then, for all $\vec{t}\in\R'$, $\nu\in\M$, $0\leq a \leq r_n-1$ and $1\leq i \leq \eta_G$, define the analogous objects
\begin{align*}
\R_{H,\nu} & = \{\vec{\alpha}\in\R_H: \nu\vec{\alpha}=0\}\\
\R_{H,\vec{t},\nu;a,i} & = \{\vec{\alpha}\in\R_{H,\nu} : c(\vec{\alpha})=c_i \mbox{ and } \vec{t}\cdot\vec{\alpha}\equiv a \Mod{r_n}\} \\
m_{H,t,\nu;a,i} & = |\R_{H,\vec{t},\nu;a,i}|\\
m_{H,\vec{t},\nu,i} & = \underset{0\leq a \leq r_n-1}{\max}(m_{H,\vec{t},\nu;a,i})\\
m_{H,i} & = \underset{\substack{\vec{t}\in\R' \\ \nu\in\M}}{\max}(m_{H,\vec{t},\nu,i})
\end{align*}

Now,
$$m_{H,i} = m_{H,0,0;0,i} =  |\{\vec{\alpha}\in\R_H: c(\vec{\alpha})=c_i\}| = \phi_{H}(s_i)$$
since $c_i = |G|-\frac{|G|}{e(\vec{\alpha})}$ and $e(\vec{\alpha})$ is the order of $\vec{\alpha}$ as seen as an element in $G$. So, if $\vec{\alpha}\in\R_H$, then it can be seen as element in $H$ and will have the same order. Notice, however, that we could have $\phi_H(s)=0$ even if $\phi_G(s)\not=0$.

\begin{proof}[Proof of Theorem \ref{mainthm2}]

Lemma \ref{inex} and Corollary \ref{curvecor1} tell us that
$$\Hh^*_{G,g}(\vec{k},E) = \sum_{H \subset G} \sum_{\vec{d}(\vec{\alpha})} \F_{\vec{d}(\vec{\alpha});\vec{k},E}$$
where the inner sum is over all $\vec{d}(\vec{\alpha})$ that satisfy
\begin{align}\label{genform10}
&d(\vec{\alpha}) =0, \vec{\alpha}\not\in\R_H\nonumber \\
&d_j = \sum_{\vec{\alpha}\in\R}\alpha_jd(\vec{\alpha}) \equiv k_j \Mod{r_j}, j=1,\dots,n  \\
&\sum_{\vec{\alpha}\in\R} c(\vec{\alpha})d(\vec{\alpha}) = 2g+2|G|-2-c(\vec{k}). \nonumber
\end{align}

Therefore, if there are no solutions to \eqref{genform1} and \eqref{equiv1}, then the above sum is empty and we have that $|\Hh_{G,g}(\vec{k},E)|=|\Hh^*_{G,g}(\vec{k},E)| = 0$. From now on, we will assume that there exists a solution to \eqref{genform1} and \eqref{equiv1} so that the above sum is non-empty. Further, note that if $g\not\equiv 1 \Mod{|G|/|H|}$ for some $H$ then there would be no solutions to \eqref{genform10} as this would correspond to a curve with a non-integer genus, which is impossible.

Moreover, if $H\cong \Z/s_1\Z\times\dots\Z/s_n\Z$ where $s_j|r_j$, then $\R_H$ can be identified with the set
$$[0,\dots,s_1-1]\times\dots\times[0,\dots,s_n-1]\setminus\{(0,\dots,0)\}.$$
This allows us to apply Proposition \ref{rescalcprop1} to obtain
\begin{align*}
|\Hh^*_{G,g}(\vec{k},E)|&  =\sum_{H\subset G} \mu(G/H)\left( \sum_{j=1}^{\eta_H} P_{H,j;\vec{k},E}(2g)q^{\frac{2g+2|G|-2}{|G|-\frac{|G|}{s_{H,j}}}} + O\left(q^{\frac{(1+\epsilon)g}{|G|-\frac{|G|}{s_{H,1}}}}\right) \right)
\end{align*}
where $\eta_H$ is the number of non-trivial divisors of $\exp(H)$ and $1=s_{H,0}<s_{H,1}<\dots<s_{H,\eta_H}=\exp(H)$ are the divisor of $\exp(H)$ and $P_{H,j;\vec{k},E}$ is a quasi-polynomial of degree at most $\phi_H(s_{H,j})-1$ if $g\equiv 1 \Mod{|G|/|H|}$ and identically the $0$ polynomial otherwise.  Since $\exp(H)|\exp(G)$ for all $H\subset G$ and $\phi_H(s_{H,j})\leq \phi_G(s_{G,j})$, we can write
\begin{align*}
|\Hh_{G,g}(\vec{k},E)|& =\sum_{j=1}^{\eta} P_{j;\vec{k},E}(2g)q^{\frac{2g+2|G|-2}{c_j}} + O\left(q^{\frac{(1+\epsilon)g}{c_1}} \right)
\end{align*}
where $c_j$ and $\eta=\eta_G$ are as above and $P_{j;\vec{k},E}$ is a quasi-polynomial of degree at most $\phi_G(s_j)-1$.

\end{proof}

\begin{proof}[Proof of Theorem \ref{mainthm1}]

If we set $\ell=0$, then we get $E=\emptyset$ is an empty matrix and thus the condition on it vanishes in $\Hh_{\vec{k},\emptyset}(G,g)$. Therefore,
\begin{align*}
|\Hh(G,g)| & = \sum_{\vec{k}\in\R'} |\Hh_{\vec{k},\emptyset}(G,g)|\\
& = \sum_{\vec{k}\in\R'} \sum_{j=1}^{\eta} P_{j;\vec{k},\emptyset}(2g) q^{\frac{2g+2|G|-2}{c_j}} + O\left(q^{\frac{(1+\epsilon)g}{c_1}} \right)\\
& = \sum_{j=1}^{\eta}\sum_{\vec{k}\in\R'} P_{j;\vec{k},\emptyset}(2g )q^{\frac{2g+2|G|-2}{c_j}} + O\left(q^{\frac{(1+\epsilon)g}{c_1}} \right) \\
& = \sum_{j=1}^{\eta}P_{j}(2g)q^{\frac{2g+2|G|-2}{c_j}} + O\left(q^{\frac{(1+\epsilon)g}{c_1}} \right)
\end{align*}
where $P_j$ is a quasi-polynomial of degree at most $\phi_G(s_j)-1$. To show that $P_1$ has exact degree, suppose
$$P_1(2g) = a_0(g)(2g)^{\phi_G(s_j)-1} + a_1(g)(2g)^{\phi_G(s_j-2)} + \dots$$
for some periodic function $a_i$ with integer period. Now, our results, shows that
$$\sum_{j=0}^{c_1-1}q^{-\frac{j}{c_1}}|\Hh_{G,g+j}| \sim \sum_{j=0}^{N-1} a_0(g+j) (2g)^{\phi_G(s_1)-1}q^{\frac{2g+2|G|-2}{c_1}}.$$
Further, \eqref{wright} then tells us that
$$\sum_{j=0}^{c_1-1} a_0(g+j) = C(K,G)\not=0.$$
Therefore, $a_0$ will be non-zero for at least one integer in every interval of length $c_1$. That is, $a_0$ is not identically $0$ and $P_1$ has degree exactly $\phi_G(s_1)-1$.

\end{proof}

\section{$G=(\Z/Q\Z)^n$}\label{Q^n}

In this section we will determine the leading coefficient of $P_{\vec{k},E,1}$ and $P_1$ that appear in Corollaries \ref{mainthm1cor} and \ref{mainthm2cor} in the case that $G=(\Z/Q\Z)^n$.

The reason we are able to determine the leading coefficient of $P_1$ in this case is that the genus and M$\ddot{o}$bius inversion formulas become simpler when $G=(\Z/Q\Z)^n$. Indeed, in this case \eqref{genform} becomes
\begin{align}\label{genformQ}
2g+2Q^n-2 = \begin{cases} (Q^n-Q^{n-1})\sum_{\vec{\alpha}\in\R}d(\vec{\alpha}) & d_j\equiv 0 \Mod{Q}, j=1,\dots,n \\   (Q^n-Q^{n-1})(\sum_{\vec{\alpha}\in\R}d(\vec{\alpha}) +1) & \mbox{otherwise} \end{cases}
\end{align}

Therefore, by Theorem \ref{mainthm2}, we get that if $2g+2Q^n-2\equiv 0 \Mod{Q^n-Q^{n-1}}$ then
$$\Hh_{(\Z/Q\Z)^n,g}(\vec{k},E) = \begin{cases} P_{\vec{0},E}(2g)q^{\frac{2g+2Q^n-2}{Q^n-Q^{n-1}}} & \vec{k}=\vec{0} \\ P_{\vec{k},E}(2g)q^{\frac{2g+2Q^n-2}{Q^n-Q^{n-1}}-1} & \vec{k}\not=\vec{0} \end{cases} + O\left(q^{(1+\epsilon)\frac{g}{Q^n-Q^{n-1}}}\right)$$
for some quasi-polynomial $P_{\vec{k},E}$ whose degree is at most $\phi_G(Q)-1=Q^n-2$. In fact we will show the is, in fact, a polynomials and it has exact degree $Q^n-2$. For the rest of this section we will always be assuming that $2g+2Q^n-2 \equiv 0 \Mod{Q^n-Q^{n-1}}$.

We see that in this case we get $c(\vec{\alpha})=c(\vec{d}) = Q^n-Q^{n-1}$ for all $\vec{\alpha}\in\R$. Therefore, since we always assumed $c(\vec{\alpha})$ was arbitrary we can apply the results therein to this case $c(\vec{\alpha})=c(\vec{d})=1$ and $D=\frac{2g+2Q^n-2}{Q^n-Q^{n-1}}\in\mathbb{N}$ in order to find the leading coefficient of $P_{\vec{k},E}$.

If we look back at where the quasi-polynomials come from, it is because we have a factor of the form $\zeta_{c_i}^{-kD}$ appearing in the constant $C_{\vec{t},\nu;a,i,k}$ in Lemma \ref{rescalclem1}. Therefore, since in the case we can assume $c_i=1$ for all $i$, these terms disappear and we in fact get that $P_{\vec{k},E}$ is a polynomial and not a quasi-polynomial.

\begin{rem}
By setting $D=\frac{2g+2Q^n-2}{Q^n-Q^{n-1}}$ instead of just $2g+2Q^n-2$, we are now counting by \textit{conductor} instead of discriminant (genus). This is more analogous to what Bucur, et al. did in \cite{BDFK+}. Because we can easily switch to counting by conductor is why it is easier to compute the constant in this case
\end{rem}

In this setting, for all $\vec{t}\in\R'$ and $\nu\in\M$, we have that $A_{\vec{t},\nu}(z)$ will have poles of order $m_{\vec{t},\nu;a}$ when $z=(q\xi_Q^a)^{-1}$ where
$$m_{\vec{t},\nu;a} = |\{\vec{\alpha}\in\R_{\nu} : \vec{t}\cdot\vec{\alpha}\equiv a \Mod{Q}\}|.$$
Now, since $\left(\Z/Q\Z\right)^n$ can be viewed as a vector space over the field $\Z/Q\Z$, we get that the action of $\nu$ and $\vec{t}$ on $\R$ are vector space morphisms. Therefore, the set
$$\{\vec{\alpha}\in\R_{\nu} : \vec{t}\cdot\vec{\alpha}\equiv a \Mod{Q}\}\subsetneq \R$$
unless $\nu=0$, $\vec{t}=\vec{0}$ and $a=0$. In which case we get
$$m_{\vec{0},0;0} = |\R|=Q^n-1.$$

Therefore, combining the results of Section \ref{rescalcsec}, we get that the leading coefficient of $P_{\vec{k},E}$ is
$$C_{\vec{k},E} = \frac{1}{(Q^n-2)!}\left(1-q^{-1}\right)^{Q^n-1}\prod_{P}\left(\frac{ |P|^{Q^n-1}+(Q^n-1) |P|^{Q^n-2}}{(|P|+1)^{Q^n-1}} \right) \left(\frac{q}{Q^n(q+Q^n-1)}\right)^{\ell} $$
$$ = \frac{1}{(Q^n-2)!} \frac{L_{Q^n-2}}{\zeta_q(2)^{Q^n-1}} \left(\frac{q}{Q^n(q+Q^n-1)}\right)^{\ell}$$
where
\begin{align}\label{Lnum}
L_m = \prod_{j=1}^m \prod_{P}\left(1 - \frac{j}{(|P|-1)(|P|+j)}\right)
\end{align}
and
\begin{align}\label{zeta}
\zeta_q(s) = \sum_{F\in\Ff_q[X]} \frac{1}{|F|^s} = \frac{1}{1-q^{1-s}}
\end{align}
is the zeta function where $|F|=q^{\deg{F}}$.

Notice that $C_{\vec{k},E}$ does not depend on $\vec{k}$ or $E$. Therefore, if we set $\ell=0$ and sum over all $\vec{k}$, we get
\begin{align*}
\Hh_{(\Z/Q\Z)^n,g}  = & \sum_{\vec{k}\in\R'} \Hh_{(\Z/Q\Z)^n,g)}(\vec{k},\emptyset) \\
= & P_{\vec{0},\emptyset}\left(\frac{2g+2Q^n-2}{Q^n-Q^{n-1}}\right)q^{\frac{2g+2Q^n-2}{Q^n-Q^{n-1}}} + \sum_{\vec{k}\in\R} P_{\vec{k},\emptyset}\left(\frac{2g+2Q^n-2}{Q^n-Q^{n-1}}\right)q^{\frac{2g+2Q^n-2}{Q^n-Q^{n-1}}-1}\\
& + O\left(q^{(1+\epsilon)\frac{g}{Q^n-Q^{n-1}}}\right)\\
= & P\left(\frac{2g+2Q^n-2}{Q^n-Q^{n-1}}\right)q^{\frac{2g+2Q^n-2}{Q^n-Q^{n-1}}} + O\left(q^{(1+\epsilon)\frac{g}{Q^n-Q^{n-1}}}\right)
\end{align*}
where $P$ is a polynomial of degree $Q^n-2$ with leading coefficient
\begin{align*}
C & = C_{0,\emptyset} + \sum_{\vec{k}\in\R}C_{\vec{k},\emptyset}q^{-1}\\
& = \frac{1}{(Q^n-2)!}\frac{q+Q^n-1}{q} \frac{L_{Q^n-2}}{\zeta_q(2)^{Q^n-1}}
\end{align*}
which is exactly the analogue of the constant in \cite{BDFK+}.

Since the condition $F_j(x_{q+1})\not=0$, where $x_{q+1}$ is the point at infinity, is equivalent to saying $\deg(F_j)\equiv 0 \Mod{r_j}$ for $j=1,\dots,n$, we get that if $\epsilon_{i,j}\in\mu_{r_j}$ for $i=1,\dots,q+1$ and $j=1,\dots,n$. Then as $g\to\infty$

\begin{align*}
& \frac{|\{C\in\Hh_{G,g} : \chi_{r_j}(F_j(x_i))=\epsilon_{i,j}, i=1,\dots,q+1, j=1,\dots,n\}|}{|\Hh_{G,g}|}\\
= & \frac{\frac{1}{Q^n}|\Hh_{G,g}(0,E)|}{|\Hh_{G,g}|} = \left(\frac{q}{Q^n(q+Q^n-1)}\right)^{q+1} \left(1+O\left(\frac{1}{g}\right)\right)
\end{align*}
where the $\frac{1}{Q^n}$ factor in the first equality comes from the fact the leading coefficients of the $F_j$ must satisfy $\chi_{r_j}(c_j)=\epsilon_{q+1,j}$.

Finally, from this result, the exact same argument will work to show that as $g\to\infty$,
$$\frac{|\{C\in\Hh_{G,g} : \#C(\Pp^1(\Ff_q)) = M\}|}{|\Hh_{G,g}|} = \Prob\left(\sum_{i=1}^{q+1} X_i= M\right)\left(1+O\left(\frac{1}{g} \right)\right)$$
where the $X_i$ are $i.i.d.$ random variables taking value $0$, $Q^n$ or $Q^{n-1}$  such that
$$X_i = \begin{cases} Q^{n-1} & \mbox{with probability }  \frac{Q^n-1} {Q^{n-1}(q+Q^n -1)} \\ Q^n & \mbox{with probability }  \frac{q}{Q^n(q+Q^n-1)} \\ 0 & \mbox{with probability } \frac{(Q^n-1)(q+Q^n-Q)}{Q^n(q+Q^n-1)}    \end{cases}.$$

\bibliography{FullSpace}
\bibliographystyle{amsplain}

\end{document}